\documentclass[12pt,a4paper]{article}

\usepackage{amsmath,amssymb,amsthm}
\usepackage{mathtools}
\usepackage[shortlabels]{enumitem}
\setlist[enumerate]{label=(\roman*)}
\usepackage{graphicx}
\usepackage{tikz}
\usepackage{tikz-cd}
\usepackage[colorlinks=false]{hyperref}
\hypersetup{
    colorlinks,
    linkcolor={red!50!black},
    citecolor={blue!50!black},
    urlcolor={blue!80!black}
}
\usepackage[capitalise]{cleveref}
\usepackage{xcolor}
\usepackage{natbib}
\bibliographystyle{apalike}
\setcitestyle{semicolon}

% Theorem environment
\newtheorem{theorem}{Theorem}
\newtheorem{lemma}[theorem]{Lemma}
\newtheorem{prop}[theorem]{Proposition}
\newtheorem{cor}[theorem]{Corollary}
\theoremstyle{definition}
\newtheorem{defn}[theorem]{Definition}
\newtheorem{example}[theorem]{Example}
\newtheorem*{remark}{Remark}

% Some operator definitions
\newcommand \dm[1]  { \,\mathrm d{#1} }

\renewcommand{\epsilon}{\varepsilon}
\renewcommand{\phi}{\varphi}
\renewcommand{\emptyset}{\varnothing}
\renewcommand{\leq}{\leqslant}
\renewcommand{\geq}{\geqslant}

\newcommand{\norm}[1]{\left\lVert#1\right\rVert}

\newcommand{\norml}[3]{\norm{#1}_{L^{#2}({#3})}}

\newcommand{\normw}[3]{\norm{#1}_{W^{#2}({#3})}}

\newcommand{\pair}[1]{\left\langle{#1}\right\rangle}

\newcommand{\abs}[1]{\left\vert#1\right\vert}

\renewcommand{\hat}[1]{\widehat{#1}}

\renewcommand{\tilde}{\widetilde}
\newcommand{\e}{\mathrm{e}}
\renewcommand{\i}{\mathrm{i}}
\renewcommand{\Re}{\mathrm{Re}}

\newcommand{\tpi}{2 \pi \mathrm{i}}

\newcommand{\R}{\mathbb{R}}

\newcommand{\C}{\mathbb{C}}
\newcommand{\N}{\mathbb{N}}

\newcommand{\Z}{\mathbb{Z}}

\newcommand{\Cc}{C_c^\infty}

\newcommand{\qtext}[1]{\quad\text{#1}\quad}
\newcommand{\qand}{\qtext{and}}
\newcommand{\qfa}{\qtext{for all}}
\newcommand{\ie}{i.e.\ }
\newcommand{\eg}{e.g.}

\title{$L^p$ continuity of eigenprojections for 2-d Dirichlet Laplacians under perturbations of the domain}
\author{Ryan L. Acosta Babb\thanks{r.acosta-babb@warwick.ac.uk, University of Warwick, UK (corresponding author)}%
\and James C. Robinson\thanks{j.c.robinson@warwick.ac.uk, University of Warwick, UK}}
\date{}

\begin{document}
\maketitle
\tableofcontents

\begin{abstract}
    We generalise results by \cite{Lamberti2005}
    concerning the continuity of projections onto eigenspaces of self-adjoint differential operators
    with compact inverses as the (spatial) domain of the functions is perturbed in $\R^2$.
    Our main case of interest is the Dirichlet Laplacian on a square.
    We extend their results from bounds from $H_0^1$ to $H_0^1$
    to bounds from $L^p$ to $L^p$, under the assumption
    that $(-\Delta^{-1}-z)^{-1}$ is $L^p$ bounded when $z$ lies outside of the spectrum of $-\Delta^{-1}$.
    We show that this assumption is met if the initial domain is a square or a rectangle.
\end{abstract}
%\setlength{\epigraphwidth}{3.5in}
%\epigraph{\flushright Whereof hee soon aware,\\
%Each perturbation smooth'd with outward calme.}{Milton, \emph{Paradise Lost}, Bk. IV}

\section{Introduction}
It is a classical consequence of the Uniform Boundeness Principle
that the norm convergence of Fourier series is equivalent to
the uniform boundedness of the operator norms of the partial sums
\citep[see][Chap.\,4]{GrafakosCFA}.
In dimensions 2 or higher, Fefferman's celebrated theorem \citep{Fefferman1971} on the unboundedness of the ball multiplier in $L^p$
has as a consequence that we cannot choose the partial sums in any way we want,
since convergence is sensitive to the choice of ``truncation'' for these infinite series.

For concreteness, let us fix our dimension $d=2$ and consider the lattice $\Z^2$,
which serves to index the Fourier coefficients of a periodic function $f:[0,1]^2\to\R$.
Then there are many ways in which we can group these indices in order to determine
the cutoff point for a partial sum of the series
\[
    \sum_{(m,n)\in\Z^2}\hat{f}(m,n)\e^{\tpi(mx+ny)}.
\]
A natural choice would be to truncate by the size of the eigenvalues associated to each
exponential (viewed as an eigenfunction of the Laplacian).
Up to constants, this would mean fixing $N>0$ and summing over all $(m,n)$ with $m^2+n^2<N^2$.
Geometrically, we are using expanding concentric circles in the lattice to determine the cutoff.
As we mentioned earlier, Fefferman's theorem shows that this method never yields $L^p$ convergence
outside the ``trivial'' case $p=2$.
It is a straightforward extension of this result that other ``curved'' cutoffs,
such as ellipses, will also fail to yield convergence
\citep[see][Chap.\,5 and its exercises for a discussion]{Grafakos2014MFA}.

On the other hand, taking squares or rectangles to mark the cutoff does work for all $1<p<\infty$,
as is well-known \citep[see][Chap.\, 4]{GrafakosCFA}.
Results by \cite{Cordoba1977} show that other polygonal regions also generate ``good'' cutoffs
within the range $4/3<p<4$.

So far we have been interested in the geometry of ``frequency space'', \ie the lattice $\Z^2$.
All results above concern functions defined on the torus, \ie periodic functions on the square.
The scaling properties of Fourier series imply that all results carry over to functions
defined on rectangles $[0,a]\times[0,b]$.
Recently, one of the authors has transferred $L^p$ convergence results
to functions defined on a certain class of triangular domains \citep{Acosta2023TFs}.
Another case of interest is functions defined on the disc,
which, to the best of our knowledge, remains open
\citep[see][for a discussion of some of the issues]{Acosta2023BF}.

One way in which we could explore the issue of convergence in other planar domains
is by perturbation.
The idea is to take a planar region $\Omega$ and deform it into another region $\tilde{\Omega}$.
We then study how the eigenfunctions of the Laplacian
and the projection operators onto eigenspaces
change alongside this change to $\Omega$.
The hope is that, for sufficiently small perturbations
we can carry over convergence results on a domain where they are known
(\eg\,a square, a triangle)
to new domains where the problem is still unsolved.
The importance of having such $L^p$ convergence results for problems in PDE
is discussed in \cite{RobinsonFefferman2022}.
They note that a general understanding of $L^p$ convergence for eigenfunction expansions
is still open, prompting our current efforts.

We now discuss the setup of the perturbation approach in some more detail,
following \cite{Lamberti2005}.

Let $\Omega\subset\R^2$ be a bounded, open and connected domain in the plane
(so, in particular, the Poincar\'e inequality holds on $\Omega$).
We perturb $\Omega$ by a map $\phi\colon\Omega\to\tilde{\Omega}$
close to the identity in a sense to be made precise later.
We are interested in relating the Dirichlet eigenproblem on $\tilde{\Omega}$ back to that on $\Omega$.
We wish to compare the two operators by applying them to functions defined on $\Omega$.
Thus, ``$\Delta$'' will denote the Laplacian acting on $u\in H_0^1(\Omega)$,
while ``$\Delta_\phi$'' denotes the Laplacian acting on $u\circ\phi^{-1}\in H_0^1(\tilde{\Omega})$,
and then ``pulled back'' via $\phi$ to $H_0^1(\Omega)$.
Thus, $\Delta - \Delta_\phi$ is well-defined as an operator acting on functions defined on $\Omega$.

We list the (positive) eigenvalues of $-\Delta$, \emph{including multiplicities}, as \[
    0 < \lambda_1 \leq \lambda_2 \leq \lambda_3 \leq \ldots.
\] Similarly, we denote the eigenvalues of $-\Delta_\phi$ by ``$\tilde{\lambda}_j$''.

Note that, since $0$ is not an eigenvalue,
the Laplacian and its inverse yield equivalent eigenvalue problems:
\begin{equation}\label{eqn:equiveval}
    -\Delta u =\lambda u \qtext{if and only if} -\Delta^{-1}u = \lambda^{-1}u.
\end{equation}
We will therefore concentrate on the latter problem, since the inverse operators
are compact and self-adjoint \citep[see][Lemma 3.5]{Lamberti2005}.

We will need to impose a further technical condition on the eigenspaces we consider.
Fix a finite subset $F\subset \N$ and denote by ``$P_F$'' the orthogonal projection in $L^2$
onto the eigenfunctions $u_j$ associated to the $\lambda_j$ with $j\in F$.
Note, in particular, that $u_j\neq u_{j+1}$ with $\lambda_{j}=\lambda_{j+1}$
if $\lambda_j$ has a multiplicity of at least 2.

\begin{defn}
    We say that $F$ \emph{splits}
    an eigenvalue $\lambda$ if there are indices $j,k\in\N$ such that $\lambda = \lambda_j = \lambda_k$
    and $j\in F$ but $k\notin F$.
\end{defn}

In terms of the projection $P_F$, this means that $P_Fu_j = u_j$ but $P_Fu_{k}=0$
even though $-\Delta u_{j} = \lambda u_j$ and $-\Delta u_{k} = \lambda u_{k}$.
We need to rule out eigenvalue splitting in order to apply Kato's projection formula
(see the remark following \cref{thm:hilbertspaceproj} for further details).

In \cref{sec:prelims} we set up the necessary notation
and discuss the perturbation of the domain in some detail.
In \cref{sec:evals} we prove that under certain assumptions on $\phi$,
$F$ will not split eigenvalues of $\Delta_\phi$ if it did not split eigenvalues of $\Delta$.
\cref{sec:operators} contains the main estimates.
In \cref{sec:main} we state and prove our main result for the Dirichlet Laplacian:
$L^p$ to $L^p$ boundedness of the projections under perturbations of the domain $\Omega$,
assuming boundedness of the resolvents of $\Delta^{-1}$.
The brief \cref{sec:resolvent} is devoted to showing that this assumption is met
for square and rectangular domains.

Throughout we will use the notation $A(s) \lesssim B(s)$ to mean that there is a constant $C>0$,
which does not depend on $s$, such that \[
    A(s) \leq B(s) \qfa s.
\]
\section{Preliminaries}\label{sec:prelims}

We begin with a homeomorphism $\phi\colon \Omega\to \tilde{\Omega}$ and we wish to study
operators on both of these domains. Since these operators act on different
function spaces, \eg\,$H_0^1(\Omega)$ and $H_0^1(\tilde{\Omega})$,
we cannot compare them directly.

Let $u,v\in H_0^1(\Omega)$. Their $L^2$ inner product is
\begin{equation}\label{eqn:L2Om}
    \pair{u,v}_{L^2(\Omega)} = \int_{\Omega}uv\dm{x}.
\end{equation}
On the other hand, $u\circ\phi^{-1},v\circ\phi^{-1}\in H_0^1(\tilde{\Omega})$,
with $L^2$ inner product
\begin{align}\label{eqn:L2PhiOm}
    \pair{u\circ\phi^{-1},v\circ\phi^{-1}}_{L^2(\tilde{\Omega})}
        &= \int_{\phi(\Omega)}u(\phi^{-1}(y))v(\phi^{-1}(y))\dm{y}\nonumber\\
        &= \int_{\Omega}u(x)v(x)\abs{\det{D\phi}(x)}\dm{x} =: Q_\phi[u,v].
\end{align}
If we denote the standard inner product on $L^2(\Omega)$ by ``$Q$'',
then we can compare the inner products from \cref{eqn:L2Om,eqn:L2PhiOm}:
\begin{equation}\label{eqn:Qdiff}
    (Q-Q_\phi)[u,v] = \int_{\Omega}uv(1-\abs{\det{D\phi}})\dm{x} \qfa u,v\in H_0^1(\Omega).
\end{equation}

We begin by recording a key result in \cite{Lamberti2005} which we are going to generalise in this paper.
Let $H$ be a Hilbert space equipped with two different bilinear forms, $Q$ and $\tilde{Q}$.
Suppose that $T$ and $\tilde{T}$ are two compact operators from $H$ to itself
and self-adjoint on $(H,Q)$ and $(H,\tilde{Q})$, respectively.
Let $F\subset\N$ be a finite set of indices and denote by $P_F[Q,T]$ the projection
\[
    P_F[Q,T]u := \sum_{j\in F}Q[u,u_j]u_j
\] where $u_j$ are eigenfunctions of $T$ corresponding to the indices $j\in F$.

\begin{theorem}[\cite{Lamberti2005}]\label{thm:hilbertspaceproj}
Let $F\subset \N$ be a finite set of indices that does not split eigenvalues of $T$ or $\tilde{T}$.
Then, there is a positive constant $C = C(T,\tilde{T},F)$ such that
\begin{equation}
    \norm{P_F[Q,T]-P_F[\tilde{Q},\tilde{T}]}_{H\to H}
    \leq C\left(\norm{Q-\tilde{Q}}_{H\times H\to\R}+\norm{T-\tilde{T}}_{H\to H}\right).
\end{equation}
\end{theorem}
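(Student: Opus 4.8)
The plan is to reduce the difference of projections to a contour integral of resolvents and then estimate each piece using the hypothesis that $F$ does not split eigenvalues. First I would recall that, since $T$ is compact and self-adjoint on $(H,Q)$, its spectrum is discrete and the eigenvalues $\mu_j = \lambda_j^{-1}$ (reciprocals of the Laplacian eigenvalues in our application) accumulate only at $0$. Because $F$ does not split eigenvalues, the set $\{\mu_j : j \in F\}$ is separated from $\{\mu_k : k \notin F\}$ by a positive distance: there is a simple closed curve $\gamma$ in $\C$ (e.g.\ a union of small circles) enclosing exactly the eigenvalues $\{\mu_j : j \in F\}$ and no others, with $\dist(\gamma, \sigma(T)) =: \delta > 0$. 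The non-splitting condition is precisely what guarantees that no eigenvalue lies \emph{on} $\gamma$ regardless of how the multiplicities are distributed, so $\gamma$ is a legitimate separating contour for $T$.

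Next I would express $P_F[Q,T]$ as the Riesz projection
\begin{equation}
    P_F[Q,T] = \frac{1}{\tpi}\oint_\gamma (z - T)^{-1}\dm{z},
\end{equation}
where the resolvent is taken with respect to the Hilbert space structure $(H,Q)$; a parallel identity holds for $P_F[\tilde Q,\tilde T]$ \emph{using the same contour $\gamma$}, provided we first show that the eigenvalues of $\tilde T$ also avoid $\gamma$. This is where the two error terms enter: if $\norm{Q - \tilde Q}$ and $\norm{T - \tilde T}$ are small, then the eigenvalues of $\tilde T$ (on $(H,\tilde Q)$) are close to those of $T$, so they too stay inside or outside $\gamma$ without touching it — but for the theorem as stated we only need \emph{qualitative} non-splitting of $\tilde T$, which is a hypothesis, so $\gamma$ separates $\sigma(\tilde T)$ as well, and the only remaining point is to bound $\dist(\gamma, \sigma(\tilde T))$ below; I would do this by a perturbation argument (e.g.\ noting that the $Q$-self-adjoint operator $\tilde T$ is conjugate, via the positive square root of the $Q$-to-$\tilde Q$ density, to a $\tilde Q$-self-adjoint operator, with the conjugation close to the identity when $\norm{Q-\tilde Q}$ is small) to obtain $\dist(\gamma,\sigma(\tilde T)) \geq \delta/2$ once the perturbations are below a threshold depending only on $T,\tilde T,F$.

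The heart of the estimate is then the resolvent difference. Subtracting the two contour integrals,
\begin{equation}
    P_F[Q,T] - P_F[\tilde Q,\tilde T] = \frac{1}{\tpi}\oint_\gamma\Bigl[(z-T)^{-1} - (z-\tilde T)^{-1}\Bigr]\dm{z},
\end{equation}
and I would apply the second resolvent identity $(z-T)^{-1} - (z-\tilde T)^{-1} = (z-T)^{-1}(T - \tilde T)(z-\tilde T)^{-1}$. The factor $T - \tilde T$ contributes $\norm{T-\tilde T}_{H\to H}$ directly. The resolvent norms $\norm{(z-T)^{-1}}$ and $\norm{(z-\tilde T)^{-1}}$ are bounded on $\gamma$ by $1/\delta$ and $2/\delta$ respectively; but one must be careful that $(z-\tilde T)^{-1}$ is the resolvent in the $(H,\tilde Q)$ structure while the projection $P_F[\tilde Q,\tilde T]$ and the norm $\norm{\cdot}_{H\to H}$ refer to the ambient $H$. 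Reconciling these introduces a comparison of the $Q$- and $\tilde Q$-norms on $H$, which costs a further factor controlled by $\norm{Q - \tilde Q}_{H\times H\to\R}$ — this is the source of the first error term. Collecting: the integrand is $O(\norm{T-\tilde T}) + O(\norm{Q-\tilde Q})$ uniformly on $\gamma$, and integrating over the fixed finite-length contour $\gamma$ (whose length depends only on $T,\tilde T,F$) gives the claimed bound with $C = C(T,\tilde T,F)$.

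The main obstacle, I expect, is the bookkeeping around the two inner products: the operator $\tilde T$ is self-adjoint with respect to $\tilde Q$, not $Q$, so neither the spectral theorem nor the resolvent bound for $\tilde T$ is immediately available in the $(H,Q)$ picture. The clean way around this is to transport everything to a single Hilbert structure via the density factor $\rho$ with $\tilde Q[u,v] = Q[\rho u, v]$ (in our setting $\rho$ is multiplication by $\abs{\det D\phi}$, close to $1$), replacing $\tilde T$ by its conjugate $\rho^{1/2}\tilde T\rho^{-1/2}$, which \emph{is} $Q$-self-adjoint; the error incurred by this conjugation, and by replacing $Q$-orthogonal projection by $\tilde Q$-orthogonal projection, is $O(\norm{Q-\tilde Q})$, and must be tracked carefully to ensure it does not secretly depend on the perturbation in a circular way. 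Once that reduction is in place, the contour-integral argument above is routine.
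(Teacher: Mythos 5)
Your plan is in the same spirit as the paper's (and Lamberti's) argument — Kato's contour integral formula for the spectral projection, a bound on the resolvent along the contour, and a comparison of the two pictures — so you have identified the right machinery. But there are two places where your outline glosses over precisely the steps that carry the weight of the theorem, and one of them is close to a genuine gap.

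First, you write that ``for the theorem as stated we only need \emph{qualitative} non-splitting of $\tilde T$, which is a hypothesis, so $\gamma$ separates $\sigma(\tilde T)$ as well.'' This conflates two different things. Non-splitting of $F$ for $\tilde T$ says that $F$ does not cut an eigenspace of $\tilde T$ in half; it says nothing about whether the eigenvalues $\{\tilde\mu_j : j\in F\}$ lie inside the curve $\gamma$ that you built from $T$, nor whether $\{\tilde\mu_k : k\notin F\}$ lie outside it. To run the ``common-contour plus second resolvent identity'' computation you must first establish a quantitative eigenvalue perturbation bound of the form $\abs{\mu_j - \tilde\mu_j}\lesssim \norm{Q-\tilde Q} + \norm{T-\tilde T}$ so that, once the perturbation is below a threshold depending on $T$ and $F$, the eigenvalues of $\tilde T$ do not cross $\gamma$. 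This bound is not a footnote: it is where the $\norm{Q-\tilde Q}$ term of the theorem actually arises in the paper's treatment (via the variational/min--max characterisation of the eigenvalues, where the bilinear form enters the Rayleigh quotient). In your outline the appearance of $\norm{Q-\tilde Q}$ is attributed to converting between operator norms; but a norm comparison gives a multiplicative factor $1+O(\norm{Q-\tilde Q})$, not the additive term in the statement. That discrepancy is harmless for proving an upper bound, but it means the explanation you give for the shape of the right-hand side does not match where the term comes from.

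Second, the paper's adaptation (see the proof of the $L^p$ version, \cref{thm:AbstractProjectionLp}) does not in fact use a single contour and the second resolvent identity $\smash{(z-T)^{-1}-(z-\tilde T)^{-1}=(z-T)^{-1}(T-\tilde T)(z-\tilde T)^{-1}}$; it uses contours adapted separately to $T$ and $\tilde T$ and feeds the eigenvalue-shift estimate into Lamberti's inequality (their eq.\,(17)), producing the constants $d[T,F]$, $d[\tilde T,F]$, $C[T,F]$, $C[\tilde T,F]$ explicitly. Your common-contour resolvent-identity route is cleaner and, if carried through with the eigenvalue separation step above, yields a valid (indeed slightly sharper) bound with the same structural dependence on $T,\tilde T,F$. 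So the approach is legitimate and somewhat more elementary; what is missing is the quantitative eigenvalue estimate that legitimises the common contour and that actually generates the $\norm{Q-\tilde Q}$ term in the stated bound, together with a cleaner account of how the two bilinear forms enter (they do so through that eigenvalue estimate, not through the Riesz integral itself, since the spectral projection given by Kato's formula depends only on the operator and the contour).
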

\begin{remark}
    The main tool in the proof is Kato's integral formula for a projection:
    \[
        P_F[Q,T] = \Re\left(-\frac{1}{\tpi}\int_{\gamma[F,T]}(T-z I)^{-1}\dm{z}\right).
    \] Here $\gamma[F,T]$ is a contour encircling the portion of the spectrum of $T$
    which contains the eigenvalues in $F$.

    As we can see, the integral formula is ``blind'' to multiplicities:
    it only differentiates the values of the eigenvalues, and there is no way
    of keeping track of exactly which eigenfunctions we want to keep in our projection.
    This is the reason behind the no-splitting condition, since if our set $F$ were to split
    an eigenvalue, then Kato's formula would not actually give us the projection $P_F[T,Q]$,
    but rather a projection onto a larger space which includes all eigenfunctions for every eigenvalue ``in $F$''.
\end{remark}

We will use a similar argument in which $T = \Delta^{-1}$ and $\tilde{T}$
is a pullback to $\Omega$ of the inverse Laplacian on $\tilde{\Omega}$.
Thus, there are three Laplacians in play here:
the Laplacian $\Delta = \Delta_\Omega$ acting on $\Cc(\Omega)$;
the Laplacian $\Delta_{\tilde{\Omega}}$ acting on $\Cc(\tilde{\Omega})$;
and a transformed Laplacian $\Delta_\phi$ acting also on $\Cc(\Omega)$ as follows:
\[
    \Delta_\phi f(x) := (\Delta_{\tilde{\Omega}} (f\circ \phi^{-1}))(\phi(x)) \qfa f\in \Cc(\Omega).
\]
That is: the function $f\circ\phi^{-1}$ is defined on $\tilde{\Omega}$,
which we then differentiate using $\Delta_{\tilde{\Omega}}$,
and finally pull back to $\Omega$ by composing the result with $\phi$.

We will denote the eigenfunctions of $\Delta_\Omega$ and $\Delta_{\tilde{\Omega}}$
respectively by $u_j$ and $\tilde{u}_j$.
The eigenfunctions of $\Delta_\phi$ are $u^\phi_j := \tilde{u}_j\circ\phi$
\citep[see][Theorem 3.10]{Lamberti2005}.

We are now in a position to distinguish three ``partial sum'' operators of interest:
\begin{enumerate}
    \item Partial sums on $\Omega$, which are expansions in $L^2(\Omega)$
    in terms of the eigenfunctions $u_j$ of $\Delta_{\Omega}$ relative to $\Omega$:
    \begin{equation}\label{eqn:SF}
        P_Ff = \sum_{j\in F}\pair{u_j,f}_{\Omega}u_j \qfa f \in L^2(\Omega).
    \end{equation}
    \item Partial sums on $\tilde{\Omega}$, which are expansions in $L^2(\tilde{\Omega})$
    in terms of the eigenfunctions $\tilde{u}_j$ of $\Delta_{\tilde{\Omega}}$ relative to $\tilde{\Omega}$:
    \begin{equation}\label{eqn:SFt}
        \tilde{P}_F g = \sum_{j\in F}\pair{\tilde{u}_j, g}_{\tilde{\Omega}}\tilde{u}_j \qfa g \in L^2(\tilde{\Omega}).
    \end{equation}
    \item Transformed partial sums on $\Omega$ which are ``pull-backs'' of $\tilde{S}_F$ to $L^2(\Omega)$.
    These are expansions with respect to the eigenfunctions $u^\phi_j$ of $\Delta_\phi$ in $L^2(\Omega)$,
    now equipped with the inner product $Q_\phi$:
    \begin{equation}\label{eqn:SFphi}
        P^\phi_Ff = \sum_{j\in F}Q_\phi[u^\phi_j, f]u^\phi_j \qfa f \in L^2(\Omega).
    \end{equation}
\end{enumerate}

Most of the work will go into obtaining bounds $\norml{T-\tilde{T}}{p}{\Omega}$
which depend (i) on the change of variables map $\phi\colon \Omega\to \tilde{\Omega}$,
and (ii) on the set $F$.
In order to do so, we first need to make sense of $\Delta^{-1}$ as a bounded operator on $L^p$.

\subsection{The inverse of the Laplacian}\label{sec:fnspaces}
Under certain assumptions on the regularity of $\Omega$,
the Laplacian \[\Delta\colon W^{2,p}(\Omega)\cap W^{1,p}_0(\Omega)\to L^p(\Omega)\]
is invertible; see, for example, Theorem 9.15 in \cite{Gilbarg2001}.
\begin{theorem}\label{thm:PoissonC11}
    Let $\Omega$ be a $C^{1,1}$ domain in $\R^2$.
    Then, if $f\in L^p(\Omega)$ with $1<p<\infty$, the Dirichlet problem $\Delta u = f$ in $\Omega$
    with $u\in W_0^{1,p}(\Omega)$ has a unique solution $u\in W^{2,p}(\Omega)$.
\end{theorem}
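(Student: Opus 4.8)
The final statement to prove is Theorem~\ref{thm:PoissonC11}, the $L^p$ solvability of the Dirichlet problem for the Laplacian on a $C^{1,1}$ domain in $\R^2$. This is essentially Theorem 9.15 of \cite{Gilbarg2001} (together with the uniqueness statement), so the plan is to reduce to that reference and standard elliptic theory rather than to reprove the Calderón--Zygmund theory from scratch.

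My plan is as follows. \textbf{Existence.} The result is a special case of the general $L^p$ theory for second-order uniformly elliptic operators in divergence or non-divergence form with the Laplacian being the simplest such operator. First I would invoke the interior $W^{2,p}$ estimates (Calderón--Zygmund) together with the boundary $W^{2,p}$ estimates up to a $C^{1,1}$ portion of the boundary: for a solution $u \in W^{2,p}_{\mathrm{loc}}$ of $\Delta u = f$ with $u = 0$ on $\partial\Omega$, one has $\|u\|_{W^{2,p}(\Omega)} \lesssim \|f\|_{L^p(\Omega)} + \|u\|_{L^p(\Omega)}$, with the constant depending on $p$, $\Omega$ and the $C^{1,1}$ character of $\partial\Omega$. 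Then existence is obtained by the method of continuity: connect $\Delta$ to a reference operator (or use that for $p = 2$ existence and uniqueness hold by the Lax--Milgram theorem / spectral theory, since $0$ is not in the spectrum of the Dirichlet Laplacian), and propagate solvability along the family $\Delta_t = (1-t)\Delta_0 + t\Delta$ using the a priori estimate, as in Theorem 9.15 and Section 9.6 of \cite{Gilbarg2001}. The $L^2$-to-$L^p$ bootstrap handles the lower-order term $\|u\|_{L^p}$ via a compactness/Poincaré argument, or one absorbs it using the fact that $0$ is not an eigenvalue.

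\textbf{Uniqueness.} Suppose $u_1, u_2 \in W^{2,p}(\Omega) \cap W^{1,p}_0(\Omega)$ both solve the problem; then $w = u_1 - u_2 \in W^{2,p}(\Omega) \cap W^{1,p}_0(\Omega)$ satisfies $\Delta w = 0$. By the $W^{2,p}$ regularity and Sobolev embedding in $\R^2$, $w$ is continuous up to the boundary with $w = 0$ on $\partial\Omega$, so the maximum principle for the Laplacian (weak maximum principle for $W^{2,p}$ subsolutions, e.g. Theorem 9.1 in \cite{Gilbarg2001}) forces $w \equiv 0$. Alternatively, since $w \in W^{1,p}_0(\Omega)$ and $\Omega$ is bounded, for $p \geq 2$ one has $w \in H^1_0(\Omega)$ and integration by parts gives $\int_\Omega |\nabla w|^2 = 0$; for $1 < p < 2$ one uses the maximum-principle argument, which does not need $p \geq 2$.

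\textbf{Main obstacle.} There is no deep obstacle here since the theorem is quoted from the literature; the only real content is making sure the hypotheses line up --- in particular that $C^{1,1}$ regularity of $\partial\Omega$ is exactly the threshold under which the global $W^{2,p}$ estimate and hence the full Theorem 9.15 applies (weaker boundary regularity would only give $W^{1,p}$ or interior estimates). So the ``hard part'' is really just the bookkeeping: confirming that the method of continuity can be anchored at $p=2$ where the Dirichlet Laplacian is invertible (because $0$ is not an eigenvalue), and that the a priori estimate is uniform along the homotopy. I would therefore present the proof as a short citation of Theorem 9.15 of \cite{Gilbarg2001} for existence and $W^{2,p}$ regularity, followed by the two-line maximum-principle argument for uniqueness.
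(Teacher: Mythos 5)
The paper gives no proof of this statement at all: it simply cites Theorem 9.15 of \cite{Gilbarg2001}, as you do. Your proposal is correct and takes essentially the same approach; the additional sketch of the method of continuity and the maximum-principle argument for uniqueness is a faithful outline of what underlies that reference.
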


A similar result also holds for a square, as follows from Theorem 4.4.3.7 in \cite{Grisvard85}
(whose statement we slightly adapt).
\begin{theorem}\label{thm:Poissonpolys}
    Assume that $\Omega$ is a polygon with straight boundary edges meeting at right angles.
    Then, for each $f\in L^p(\Omega)$ there is a unique solution $u \in W^{2,p}(\Omega)\cap W_0^{1,p}(\Omega)$
    to the Poisson problem $\Delta u = f$ with zero boundary conditions.
\end{theorem}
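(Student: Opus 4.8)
The plan is to reduce the statement to Theorem~4.4.3.7 of \cite{Grisvard85}, whose hypotheses the right-angle assumption is tailor-made to satisfy. First I would recall the structure of the Dirichlet problem on a polygon: at each vertex $S_\nu$ of interior angle $\omega_\nu$, writing $(r_\nu,\theta_\nu)$ for polar coordinates centred at $S_\nu$, the functions $r_\nu^{k\pi/\omega_\nu}\sin(k\pi\theta_\nu/\omega_\nu)$ ($k\ge1$) are harmonic and vanish on the two edges meeting at $S_\nu$. Grisvard's theory produces, for $f\in L^p(\Omega)$ with $1<p<\infty$, a solution of $\Delta u=f$ with zero boundary values of the form
\[
    u = u_{\mathrm{reg}} + \sum_\nu \sum_{0<k\pi/\omega_\nu<2-2/p} c_{\nu,k}\,\chi_\nu\, r_\nu^{k\pi/\omega_\nu}\sin(k\pi\theta_\nu/\omega_\nu),
\]
with $u_{\mathrm{reg}}\in W^{2,p}(\Omega)\cap W_0^{1,p}(\Omega)$, cutoffs $\chi_\nu$ supported near $S_\nu$, and an a priori bound $\norm{u_{\mathrm{reg}}}_{W^{2,p}(\Omega)}+\sum|c_{\nu,k}|\lesssim\norm{f}_{L^p(\Omega)}$; the index restriction encodes the fact that $r^\alpha$ fails to lie in $W^{2,p}$ near the origin of $\R^2$ exactly when $\alpha\le 2-2/p$ with $\alpha$ not a non-negative even integer.

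Next I would invoke the hypothesis. Since every interior angle of $\Omega$ equals $\pi/2$, each exponent $k\pi/\omega_\nu$ equals the even integer $2k$; as $2k\ge 2>2-2/p$, the sum over singular functions in the decomposition is empty, and in any case the would-be leading term $r_\nu^2\sin(2\theta_\nu)$ is, up to the cutoff, a harmonic polynomial and hence lies in $C^\infty(\overline\Omega)\subset W^{2,p}(\Omega)$ anyway. Thus $u=u_{\mathrm{reg}}\in W^{2,p}(\Omega)\cap W_0^{1,p}(\Omega)$, settling existence with the claimed regularity. For uniqueness I would reduce to showing that $u\in W^{2,p}(\Omega)\cap W_0^{1,p}(\Omega)$ with $\Delta u=0$ forces $u\equiv0$: since $\Omega$ is a bounded Lipschitz domain, Sobolev embedding gives $W^{2,p}(\Omega)\hookrightarrow W^{1,q}(\Omega)$ with $q\ge2$ for every $1<p<\infty$ (take $q=p$ for $p\ge2$ and $q=2p/(2-p)$ for $p<2$) and preserves the vanishing trace, so $u\in H_0^1(\Omega)$; then $\int_\Omega|\nabla u|^2=-\int_\Omega u\,\Delta u=0$, whence $u\equiv0$ by the Poincar\'e inequality. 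Alternatively, uniqueness is already contained in the cited theorem.

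I do not expect a serious obstacle: the content is the reduction, and the right-angle hypothesis is precisely what makes it go through. The one point needing care is bibliographic bookkeeping --- matching the exact function-space conventions and statement of \cite[Thm.~4.4.3.7]{Grisvard85} (the ``slight adaptation'' alluded to above) and, for $1<p<2$ where $f$ need not belong to $L^2(\Omega)$, checking that the cited theorem does deliver a solution from data merely in $L^p(\Omega)$; should one prefer not to rely on that, one can approximate $f$ in $L^p(\Omega)$ by functions in $L^2(\Omega)$, solve by the $H_0^1$ theory, and pass to the limit using the a priori estimate above. Finally, I would remark that \cite{Grisvard85} can be bypassed altogether by extending $f$ to a periodic function through successive odd reflections across the edges of the square and applying interior $W^{2,p}$ elliptic regularity on the torus, using that the odd reflection across an edge of a $W^{2,p}$ function vanishing on that edge is again of class $W^{2,p}$.
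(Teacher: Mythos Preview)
The paper does not prove this statement; it simply records it as a consequence of Theorem~4.4.3.7 in \cite{Grisvard85}. Your proposal takes the same route and correctly supplies the details of the reduction: with every interior angle $\omega_\nu=\pi/2$, the singular exponents $k\pi/\omega_\nu=2k$ are even integers $\geq 2>2-2/p$, so Grisvard's singular-function expansion is empty and $u=u_{\mathrm{reg}}\in W^{2,p}\cap W_0^{1,p}$.

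One caveat worth recording: your reading of ``edges meeting at right angles'' as ``every interior angle equals $\pi/2$''---which forces $\Omega$ to be a rectangle---is the intended one, and is consistent both with the paper's motivating case of a square and with your closing reflection argument. The literal phrasing could also cover re-entrant right angles (interior angle $3\pi/2$), but then the leading singular exponent is $2/3$, which lies below $2-2/p$ once $p>3/2$, and the $W^{2,p}$ conclusion genuinely fails; so your interpretation is not just convenient but necessary for the theorem to be true.
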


These are the more ``obvious'' examples; for a discussion on more general conditions on $\partial\Omega$
for the $W^{2,p}$-solvability of the Poisson equation with $L^p$ initial data,
see Chapter 7 of \cite{Mazya85}.

In other words: for squares, rectangles and $C^{1,1}$ domains (among others),
the Dirichlet Laplacian is invertible with inverse $\Delta^{-1}\colon L^p\to W^{2,p}\cap W_0^{1,p}$.

For brevity, we will henceforth write $X_p$ for the Banach space $W^{2,p}\cap W_0^{1,p}$
equipped with the $W^{2,p}$ norm.

\begin{defn}
    Let $\phi\colon \Omega\to \tilde{\Omega}$ be an invertible map
    with bounded weak derivatives up to second order.
    Furthermore, we assume that
    \[
        \inf_\Omega \abs{\det{D\phi}} > 0.
    \]
    We call the triple $(\Omega,\phi,\tilde{\Omega})$ \emph{$p$-admissible}
    if $\phi$ is as above and $\Delta\colon X_p\to L^p$ is invertible
    on both $\Omega$ and $\tilde{\Omega}$.
    We will often simplify this to ``$\phi\colon\Omega\to\tilde{\Omega}$ is $p$-admissible''
    or even ``$\phi$ is $p$-admissible''.
\end{defn}
\begin{remark}
    We require one more order of differentiation than \cite{Lamberti2005},
    as well as the invertibility of $\Delta$ from $X_p$ to $L^p$,
    which comes ``for free'' on bounded domains in $\R^2$
    when working with $\Delta\colon H^1_0\to H^{-1}$.
\end{remark}

Henceforth, we will assume that $(\Omega,\phi,\tilde{\Omega})$ is $p$-admissible.
\begin{defn}
    Let $Y(\cdot)$ be a function space on $\Omega$ and $\tilde{\Omega}$, and let $\phi\colon \Omega\to \tilde{\Omega}$
    be a change of coordinates.
    We define the following operators:

    \noindent\begin{minipage}{.4\linewidth}
        \begin{align*}
            C_{\phi}\colon Y(\tilde{\Omega}) &\longrightarrow Y(\Omega)\\
            v \quad&\longmapsto v\circ\phi
        \end{align*}
    \end{minipage}%
    \begin{minipage}{.2\linewidth}
        \[ \text{and} \]
    \end{minipage}%
    \begin{minipage}{.4\linewidth}
        \begin{align*}
            C_{\phi^{-1}}\colon Y(\Omega) &\longrightarrow Y(\tilde{\Omega})\\
            u \quad&\longmapsto u\circ\phi.
        \end{align*}
    \end{minipage}
\end{defn}
\begin{remark}
    Note that $(C_{\phi})^{-1} = C_{\phi^{-1}}$ and vice versa.
\end{remark}

We will denote by $T$ the inverse Laplacian
\[ T = \Delta^{-1}\equiv (\Delta_{\Omega})^{-1}\colon L^p(\Omega)\to X_p(\Omega)\hookrightarrow L^p(\Omega).\]
We will also denote by $T_\phi$ the inverse of $\Delta_\phi$, \ie the map \[
    T_\phi = C_\phi \circ \Delta_{\tilde{\Omega}}^{-1} \circ C_{\phi^{-1}} \colon L^p(\Omega)\to L^p(\Omega).\]
Thus, we have the following diagram: \[
\begin{tikzcd}
    L^p(\tilde{\Omega}) \arrow[r, "\Delta_{\tilde{\Omega}}^{-1}"]            & X_p(\tilde{\Omega}) \arrow[r, hook] & L^p(\tilde{\Omega}) \arrow[d, "C_\phi"] \\
    L^p(\Omega) \arrow[r, "\Delta_{\Omega}^{-1}"] \arrow[u, "C_{\phi^{-1}}"] & X_p(\Omega) \arrow[r, hook]         & L^p(\Omega)
    \end{tikzcd}
\] where $T_\phi$ is the ``up--over--down'' path from $L^p(\Omega)$ to $L^p(\Omega)$.

We now introduce a slightly modified version of the operator $\Delta_\phi$.
\begin{defn}\label{defn:Lphi}
    Let $\mathcal{L}_\phi\colon L^p(\Omega)\to L^p(\Omega)$ be the operator defined by \[
        \mathcal{L}_\phi := C_{\phi^{-1}}^{t} \circ \Delta_{\tilde{\Omega}} \circ C_{\phi^{-1}},
    \] where $C_{\phi^{-1}}^t\colon L^p(\tilde{\Omega})\to L^p(\Omega)$ is the operator defined, for $v\in L^p(\tilde{\Omega})$, by \[
        \pair{C_{\phi^{-1}}^tv, w} := \pair{v, C_{\phi^{-1}}w}
            \equiv \int_{\tilde{\Omega}}v(y)w(\phi^{-1}(y))\dm{y} \qfa w \in L^q(\Omega).
    \]
\end{defn}
\begin{lemma}
    The operator $C_{\phi^{-1}}^t$ is given by \[
        \left[C_{\phi^{-1}}^{t}v\right](x) = \abs{\det{D\phi(x)}}[C_\phi v](x) \qfa v \in L^p(\tilde{\Omega}),
    \] with inverse \[
        \left[(C_{\phi^{-1}}^t)^{-1}u\right](y) = C_\phi^{-1}\left(\frac{u(x)}{\abs{\det{D\phi(x)}}}\right)(y) \qfa u \in L^p(\Omega).
    \] In particular, \begin{equation}\label{eqn:Cphitinv}
        (C_{\phi^{-1}}^{t})^{-1} = C_\phi^{t}.
    \end{equation}
\end{lemma}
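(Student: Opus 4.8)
The plan is to compute the action of $C_{\phi^{-1}}^t$ directly from its defining pairing and then read off the inverse. First I would fix $v\in L^p(\tilde\Omega)$ and $w\in L^q(\Omega)$ and start from $\pair{C_{\phi^{-1}}^t v, w} = \int_{\tilde\Omega} v(y)\, w(\phi^{-1}(y))\dm{y}$. Applying the change of variables $y = \phi(x)$ (legitimate since $\phi$ is invertible with $\inf_\Omega\abs{\det D\phi}>0$ and bounded weak derivatives, so the Jacobian is bounded above and below away from zero) turns this into $\int_\Omega v(\phi(x))\, w(x)\, \abs{\det D\phi(x)}\dm{x} = \int_\Omega \abs{\det D\phi(x)}\,[C_\phi v](x)\, w(x)\dm{x}$. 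Since this holds for all $w\in L^q(\Omega)$, we conclude $[C_{\phi^{-1}}^t v](x) = \abs{\det D\phi(x)}\,[C_\phi v](x)$, which is the first claimed formula.

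Next I would verify the formula for the inverse. Given $u\in L^p(\Omega)$, set $v(y) := [C_\phi^{-1}(u/\abs{\det D\phi})](y)$, i.e. $v = C_{\phi^{-1}}$ applied to the function $x\mapsto u(x)/\abs{\det D\phi(x)}$, so that $v(\phi(x)) = u(x)/\abs{\det D\phi(x)}$. Then $[C_{\phi^{-1}}^t v](x) = \abs{\det D\phi(x)}\,[C_\phi v](x) = \abs{\det D\phi(x)}\, v(\phi(x)) = u(x)$, so this $v$ is a right inverse; the analogous computation starting from $v\in L^p(\tilde\Omega)$ shows it is also a left inverse. One should note here that $u\mapsto u/\abs{\det D\phi}$ maps $L^p(\Omega)$ to itself boundedly (again because the Jacobian is bounded above and below), and $C_\phi^{-1} = C_{\phi^{-1}}$ is bounded on $L^p$ by the same change-of-variables estimate, so $(C_{\phi^{-1}}^t)^{-1}$ is a well-defined bounded operator $L^p(\Omega)\to L^p(\tilde\Omega)$.

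Finally, \cref{eqn:Cphitinv} is a matter of matching the displayed inverse formula against the definition of $C_\phi^t$: by the same pairing argument applied to $C_\phi\colon L^p(\Omega)\to L^p(\tilde\Omega)$ one has $[C_\phi^t u](y) = \abs{\det D\phi^{-1}(y)}\,[C_{\phi^{-1}} u](y) = \abs{\det D\phi^{-1}(y)}\, u(\phi^{-1}(y))$, and since $\abs{\det D\phi^{-1}(y)} = \abs{\det D\phi(\phi^{-1}(y))}^{-1}$ this equals $[C_{\phi^{-1}}(u/\abs{\det D\phi})](y)$, which is exactly the expression given for $(C_{\phi^{-1}}^t)^{-1}u$. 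Hence $(C_{\phi^{-1}}^t)^{-1} = C_\phi^t$.

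I do not anticipate a serious obstacle: the only point requiring care is justifying the change of variables for merely weakly differentiable $\phi$, which is covered by the standing $p$-admissibility hypotheses (invertibility, bounded second-order weak derivatives, and $\inf_\Omega\abs{\det D\phi}>0$), and keeping track of whether the Jacobian factor is evaluated at $x$ or at $\phi^{-1}(y)$ when passing between $\phi$ and $\phi^{-1}$.
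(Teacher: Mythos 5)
Your proof is correct, and the paper in fact states this lemma without supplying a proof (it is treated as an immediate calculation from \cref{defn:Lphi} and the definitions of $C_\phi$, $C_{\phi^{-1}}$). Your argument — change of variables in the defining pairing to get the pointwise formula, direct verification of the two-sided inverse using $\inf_\Omega\abs{\det D\phi}>0$, and the identity $\abs{\det D\phi^{-1}(y)}=\abs{\det D\phi(\phi^{-1}(y))}^{-1}$ to match $C_\phi^t$ against the displayed inverse — is exactly the computation the paper tacitly relies on, and your bookkeeping of whether the Jacobian is evaluated at $x$ or at $\phi^{-1}(y)$ is the only genuinely delicate point, which you handle correctly.
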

\begin{defn}
    Define the operator $\mathcal{J}_\phi\colon L^p(\Omega) \to L^p(\Omega)$ by \[
        \mathcal{J}_\phi u(x) := \abs{\det{D\phi(x)}}u(x) \qfa u \in L^p(\Omega).
    \] If $\inf_\Omega\abs{\det{D\phi}}>0$, then \[
        \mathcal{J}_\phi^{-1}u(x) = \frac{u(x)}{\abs{\det{D\phi(x)}}}.
    \]
\end{defn}
We can therefore conclude the following.
\begin{cor}\label{cor:TphiCt}
    We have $T_\phi = \mathcal{L}_{\phi}^{-1} \circ \mathcal{J}_\phi$.
\end{cor}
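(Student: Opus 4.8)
The plan is to unravel the definitions of $T_\phi$, $\mathcal L_\phi$ and $\mathcal J_\phi$ and check that the composition $\mathcal L_\phi^{-1}\circ\mathcal J_\phi$ produces precisely the ``up--over--down'' map $C_\phi\circ\Delta_{\tilde\Omega}^{-1}\circ C_{\phi^{-1}}$. First I would note that $\mathcal L_\phi = C_{\phi^{-1}}^t\circ\Delta_{\tilde\Omega}\circ C_{\phi^{-1}}$ is a composition of invertible maps under our standing $p$-admissability hypothesis: $C_{\phi^{-1}}\colon L^p(\Omega)\to L^p(\tilde\Omega)$ is invertible with inverse $C_\phi$; $\Delta_{\tilde\Omega}\colon X_p(\tilde\Omega)\to L^p(\tilde\Omega)$ is invertible with inverse $\Delta_{\tilde\Omega}^{-1}$; and $C_{\phi^{-1}}^t\colon L^p(\tilde\Omega)\to L^p(\Omega)$ is invertible with inverse $C_\phi^t$ by \cref{eqn:Cphitinv}. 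Hence $\mathcal L_\phi$ is invertible and
\[
    \mathcal L_\phi^{-1} = C_{\phi^{-1}}^{-1}\circ\Delta_{\tilde\Omega}^{-1}\circ (C_{\phi^{-1}}^t)^{-1}
        = C_\phi\circ\Delta_{\tilde\Omega}^{-1}\circ C_\phi^t.
\]

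Next I would compose on the right with $\mathcal J_\phi$, so that the result follows once we show $C_\phi^t\circ\mathcal J_\phi = C_{\phi^{-1}}$ as operators $L^p(\Omega)\to L^p(\tilde\Omega)$. This is the only genuine computation, and it is short: using the lemma that identifies $C_{\phi^{-1}}^t$ by $[C_{\phi^{-1}}^tv](x) = \abs{\det D\phi(x)}[C_\phi v](x)$, and hence (via \cref{eqn:Cphitinv}) the explicit form $[C_\phi^t u](y) = [C_\phi^{-1}(u/\abs{\det D\phi})](y)$, we get for $u\in L^p(\Omega)$ that $C_\phi^t(\mathcal J_\phi u) = C_\phi^{-1}\bigl(\mathcal J_\phi u/\abs{\det D\phi}\bigr) = C_\phi^{-1}(u) = C_{\phi^{-1}}u$, since $\mathcal J_\phi u = \abs{\det D\phi}\,u$ exactly cancels the Jacobian weight and $C_\phi^{-1} = C_{\phi^{-1}}$. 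Substituting this into the expression for $\mathcal L_\phi^{-1}$ above yields
\[
    \mathcal L_\phi^{-1}\circ\mathcal J_\phi = C_\phi\circ\Delta_{\tilde\Omega}^{-1}\circ C_\phi^t\circ\mathcal J_\phi
        = C_\phi\circ\Delta_{\tilde\Omega}^{-1}\circ C_{\phi^{-1}} = T_\phi,
\]
which is the claim.

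There is no real obstacle here — the statement is essentially a bookkeeping identity — but the one point that deserves care is the invertibility of $\mathcal L_\phi$ and, in particular, that $\Delta_{\tilde\Omega}^{-1}$ lands in $X_p(\tilde\Omega)$ so that the composition with $C_{\phi^{-1}}^t$ (defined on $L^p$) makes sense and agrees with the action on the smaller space; this is exactly where the $p$-admissability of $(\Omega,\phi,\tilde\Omega)$ and the hypothesis $\inf_\Omega\abs{\det D\phi}>0$ (so that $\mathcal J_\phi$ is invertible) enter. I would also remark that, strictly, one should check the identity on a dense subspace such as $\Cc(\Omega)$ and then extend by boundedness, since $C_{\phi^{-1}}^t$ was defined by a duality pairing; but the explicit formula for $C_{\phi^{-1}}^t$ from the lemma makes even this unnecessary, as all the operators in sight are then given by pointwise-defined formulas valid on all of $L^p$.
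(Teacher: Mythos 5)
Your proof is correct and is essentially the paper's argument: both invert the three-fold composition defining $\mathcal L_\phi$, use the identity $(C_{\phi^{-1}}^t)^{-1}=C_\phi^t$ from \cref{eqn:Cphitinv}, and then observe that the Jacobian factor in $C_\phi^t$ is exactly cancelled by $\mathcal J_\phi$. The paper simply carries out that cancellation in the other order — rewriting $C_\phi^t=C_{\phi^{-1}}\circ\mathcal J_\phi^{-1}$ so that $\mathcal L_\phi^{-1}=T_\phi\circ\mathcal J_\phi^{-1}$, then composing with $\mathcal J_\phi$ — whereas you show directly that $C_\phi^t\circ\mathcal J_\phi=C_{\phi^{-1}}$; this is the same computation.
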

\begin{proof}
    First, note that $\mathcal{L}_\phi = C_{\phi^{-1}}^{t}\circ\Delta_{\tilde{\Omega}}\circ C_{\phi^{-1}}$
    implies that
    \begin{align*}
        (\mathcal{L}_{\phi})^{-1} &= (C_{\phi^{-1}})^{-1} \circ (\Delta_{\tilde{\Omega}})^{-1} \circ (C_{\phi^{-1}}^t)^{-1}\\
            &= C_\phi \circ \Delta_{\tilde{\Omega}}^{-1} \circ C_\phi^{-1}\circ \mathcal{J}_\phi^{-1}\\
            &= (\Delta_\phi)^{-1}\circ \mathcal{J}_\phi^{-1},
    \end{align*}
    and so composing with $\mathcal{J}_\phi$ finishes the proof.
\end{proof}
These rather formal calculations are justified by the admissibility conditions on $\phi$.

\begin{remark}
    The crucial detail to observe is that we factor the (transformed) inverse Laplacian
    $T_\phi$ into two operators:
    the inverse ``Laplacian'' $\mathcal{L}_\phi^{-1}\colon L^p(\Omega)\to L^p(\Omega)$
    and the embedding $\mathcal{J}_\phi\colon L^p(\Omega)\to L^p(\Omega)$.
    This circuitous route, which is inspired by \cite{Lamberti2005},
    will vastly simplify our proof of \cref{prop:DeltaSdiff} below.
    Essentially, we will need to transfer integration by parts
    from $\Omega$ to $\tilde{\Omega}$.
    If we were simply to use $\Delta_\phi$, we would have
    \begin{align*}
        \pair{\Delta_\phi u, w} &= \pair{C_\phi\Delta_{\tilde{\Omega}} C_{\phi^{-1}}u, w}\\
        &= \int_{\Omega}(\Delta_{\tilde{\Omega}}(u\circ\phi^{-1}))(\phi(x))w(x)\dm{x}\\
        &= \int_{\tilde{\Omega}}\Delta_{\tilde{\Omega}}(u\circ\phi^{-1})(y)w(\phi^{-1}(y))\abs{\det D\phi^{-1}(y)}\dm{y}\\
        &= -\int_{\tilde{\Omega}}D_y(u\circ\phi^{-1})(y) \cdot D_y\left[(w\circ \phi^{-1})\abs{\det{D\phi^{-1}(y)}}\right]\dm{y}.
    \end{align*}
    Using the operator $\mathcal{L}_\phi$ instead will allow us to circumvent differentiating the determinant,
    which is then reintroduced by $\mathcal{J}_\phi$ in a more convenient place.
\end{remark}
\section{Continuity and splitting of eigenvalues}\label{sec:evals}
In order to apply Kato's projectio formula, we must assume that the set $F$
does not split eigenvalues of either operator.
In this section, we show that, under reasonable assumptions on $\phi$,
it is enough to assume that $F$ does not split eigenvalues of $\Delta$ on $\Omega$.
For in this case, continuity results of the eigenvalues with respect to changes of the domain
will ensure that $F$ will not split the eigenvalues of $\Delta_{\tilde{\Omega}}$ on $\tilde{\Omega}$.
The next lemma guarantees that $\Delta_{\tilde{\Omega}}$ and its pull-back
to $\Omega$ have the same eigenvalues.
Thus, if $F$ does not split eigenvalues of $T$, then
it will not split eigenvalues of $T_\phi$,
and we can apply Kato's projection formula.
(Recall that $\lambda>0$ is an eigenvalue of $-\Delta$ if, and only if,
$\frac{1}{\lambda}>0$ is an eigenvalue of $-\Delta^{-1}$.)

\begin{lemma}\label{lemma:Tphieigenvals}
    Let $\lambda\in\R$ and $v\in L^2(\tilde{\Omega})\setminus\{0\}$.
    Then, \[
        \Delta_{\tilde{\Omega}}^{-1} v = \lambda v \qtext{if, and only if,} T_\phi(C_\phi v) = \lambda C_\phi v.
    \]
\end{lemma}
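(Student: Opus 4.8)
The plan is to unwind the definition $T_\phi = C_\phi \circ \Delta_{\tilde\Omega}^{-1}\circ C_{\phi^{-1}}$ and exploit that $C_\phi$ and $C_{\phi^{-1}}$ are mutually inverse bijections on the relevant function spaces. First I would establish the forward implication: assume $\Delta_{\tilde\Omega}^{-1}v = \lambda v$ for some nonzero $v\in L^2(\tilde\Omega)$. Applying $T_\phi$ to $C_\phi v$ and using $C_{\phi^{-1}}\circ C_\phi = \mathrm{id}$ on $L^p(\tilde\Omega)$ gives
\[
    T_\phi(C_\phi v) = C_\phi\bigl(\Delta_{\tilde\Omega}^{-1}(C_{\phi^{-1}}C_\phi v)\bigr) = C_\phi\bigl(\Delta_{\tilde\Omega}^{-1} v\bigr) = C_\phi(\lambda v) = \lambda C_\phi v,
\]
where linearity of $C_\phi$ is used in the last step. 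Since $v\neq 0$ and $C_\phi$ is injective, $C_\phi v\neq 0$, so this is a genuine eigenvalue relation.

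For the converse, suppose $T_\phi(C_\phi v) = \lambda C_\phi v$. Write $w := C_\phi v$; then $v = C_{\phi^{-1}} w$, since $C_\phi$ is a bijection with inverse $C_{\phi^{-1}}$. Unpacking $T_\phi w = \lambda w$ means $C_\phi(\Delta_{\tilde\Omega}^{-1}(C_{\phi^{-1}}w)) = \lambda w$; applying $C_{\phi^{-1}}$ to both sides and using $C_{\phi^{-1}}\circ C_\phi = \mathrm{id}$ yields $\Delta_{\tilde\Omega}^{-1}(C_{\phi^{-1}}w) = \lambda\, C_{\phi^{-1}}w$, i.e. $\Delta_{\tilde\Omega}^{-1} v = \lambda v$. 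Again $v\neq 0$ because $w\neq 0$ and $C_{\phi^{-1}}$ is injective.

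The only subtlety — and the single point where care is needed — is the bookkeeping of domains and spaces: one must confirm that $C_\phi$ maps $L^2(\tilde\Omega)$ bijectively onto $L^2(\Omega)$ (and similarly for $L^p$), so that the composition $T_\phi$ is well-defined on $C_\phi v$ and the cancellations $C_{\phi^{-1}}\circ C_\phi = \mathrm{id}$, $C_\phi\circ C_{\phi^{-1}} = \mathrm{id}$ are legitimate as identities of operators between the correct spaces. This is guaranteed by the $p$-admissibility hypotheses: $\phi$ is invertible with $\inf_\Omega|\det D\phi| > 0$ and bounded derivatives, so change of variables shows $C_\phi$ is bounded with bounded inverse on every $L^r$, $1\leq r\leq\infty$. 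Since $\Delta_{\tilde\Omega}^{-1}v = \lambda v$ with $v\in L^2$ already forces $v\in X_2(\tilde\Omega)$ (and hence $v$ lies in every space in sight, by elliptic regularity and boundedness of $\tilde\Omega$), there is no regularity obstruction. I expect no genuine difficulty beyond this; the lemma is essentially the observation that conjugation by an isomorphism preserves the spectrum, made concrete for the operator $T_\phi$.
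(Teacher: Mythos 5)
Your proof is correct and matches the paper's intended argument exactly: the paper simply declares the result ``immediate from the definitions'' (citing Theorem 3.10 of Lamberti and Lanza de Cristoforis), and your unwinding of $T_\phi = C_\phi \circ \Delta_{\tilde{\Omega}}^{-1}\circ C_{\phi^{-1}}$ together with the cancellation $C_{\phi^{-1}}\circ C_\phi = \mathrm{id}$ and injectivity of $C_\phi$ is precisely the content of that observation.
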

\begin{proof}
    Immediate from the definitions; see Theorem 3.10 in \cite{Lamberti2005}.
\end{proof}

\cref{lemma:Tphieigenvals} tells us that the eigenvalues of $\Delta_{\tilde{\Omega}}^{-1}$
remain unchanged after we ``pull them back to $\Omega$''---that is the purpose of $T_\phi$.

We next check that the eigenvalues of $\Delta^{-1}$
are close to the eigenvalues of $\Delta_{\tilde{\Omega}}^{-1}$.
This we do via a theorem due to \cite{Courant1989} (see p.\,423).
\begin{theorem}\label{thm:CourantHilbert}
    Let $\phi:\Omega\to\tilde{\Omega}$ be a bijection with $\inf_{x\in\Omega}\abs{\det{D\phi}}>0$.
    Then, for any $\eta>0$ there is an $\epsilon>0$ such that if \[
        \normw{\operatorname{id}-\phi}{1,\infty}{\Omega} \leq \epsilon,
    \] then \[
        \abs{\frac{\tilde{\lambda}_j}{\lambda_j} - 1} < \eta
        \qfa j\in\N.
    \]
\end{theorem}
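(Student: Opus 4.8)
The plan is to deduce the eigenvalue ratio estimate from the min--max (Courant--Fischer) characterisation of the eigenvalues of $-\Delta$ on $\Omega$ and $-\Delta_{\tilde\Omega}$ on $\tilde\Omega$, transported to a common space via $\phi$. Using \cref{lemma:Tphieigenvals} (or the corresponding statement for $\Delta_\phi$), the eigenvalues $\tilde\lambda_j$ of $-\Delta_{\tilde\Omega}$ coincide with those of $-\Delta_\phi$ on $\Omega$, so it suffices to compare the Rayleigh quotients for $-\Delta$ and $-\Delta_\phi$ on the fixed space $H_0^1(\Omega)$. Concretely, for $u\in H_0^1(\Omega)\setminus\{0\}$ the Rayleigh quotient for $-\Delta$ is $\int_\Omega|\nabla u|^2/\int_\Omega u^2$, while the one for $-\Delta_\phi$ (after the change of variables $y=\phi(x)$, as in \cref{eqn:L2PhiOm} and the integration-by-parts identity displayed before \cref{sec:evals}) takes the form $\int_\Omega (A_\phi\nabla u)\cdot\nabla u \,/\int_\Omega u^2|\det D\phi|$, where $A_\phi(x)$ is built from $D\phi^{-1}$ and $|\det D\phi|$ in the usual way.

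The key steps, in order, are: (i) write both sets of eigenvalues as min--max over $k$-dimensional subspaces of $H_0^1(\Omega)$ of the supremum of the respective Rayleigh quotients; (ii) show that the hypothesis $\norm{\operatorname{id}-\phi}_{W^{1,\infty}(\Omega)}\le\epsilon$, together with $\inf_\Omega|\det D\phi|>0$, forces $A_\phi$ to be uniformly close to the identity matrix and $|\det D\phi|$ to be uniformly close to $1$, with closeness controlled by some modulus $\omega(\epsilon)\to0$ as $\epsilon\to0$ (this is a pointwise algebraic estimate: $D\phi=I+D(\phi-\operatorname{id})$, so $\norm{D\phi-I}_\infty\le\epsilon$, and for $\epsilon$ small enough $D\phi$ is invertible with $\norm{D\phi^{-1}-I}_\infty\lesssim\epsilon$ and $\big||\det D\phi|-1\big|\lesssim\epsilon$); (iii) conclude that the two Rayleigh quotients at any fixed $u$ differ multiplicatively by a factor $1+O(\omega(\epsilon))$, uniformly in $u$; (iv) take min--max and divide to get $\big|\tilde\lambda_j/\lambda_j-1\big|<\eta$ for all $j$ once $\epsilon$ is small enough, with $\eta$ governed by $\omega(\epsilon)$ \emph{independently of $j$} --- this uniformity in $j$ is exactly what the multiplicative (rather than additive) comparison of Rayleigh quotients buys us. Alternatively, one can invoke \cite{Courant1989} directly, but spelling out the min--max argument makes the uniformity transparent.

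The main obstacle I anticipate is ensuring the estimate is genuinely uniform in $j$ while only assuming $W^{1,\infty}$ (and not, say, $W^{2,\infty}$) smallness of $\phi-\operatorname{id}$: the integration-by-parts computation displayed before \cref{sec:evals} naively produces a term involving $D(|\det D\phi^{-1}|)$, i.e.\ second derivatives of $\phi$, which are not controlled by the hypothesis. The remedy is to phrase the comparison entirely at the level of the \emph{bilinear form} $\int_\Omega (A_\phi\nabla u)\cdot\nabla v$ obtained by the change of variables in $\int_{\tilde\Omega}\nabla(u\circ\phi^{-1})\cdot\nabla(v\circ\phi^{-1})$ --- here no derivative of $\det D\phi$ ever appears, only $D\phi^{-1}$ and $\det D\phi^{-1}$ themselves, both of which are $W^{1,\infty}$-controlled. (This is morally why the paper later works with $\mathcal L_\phi$ rather than $\Delta_\phi$: it keeps the determinant out of the differentiation.) Once the comparison is set up form-theoretically, step (ii) is a routine perturbation-of-matrices estimate and the rest is the standard min--max manipulation; the only care needed is to track that the denominators $\int_\Omega u^2$ and $\int_\Omega u^2|\det D\phi|$ are comparable with constants $\inf|\det D\phi|$ and $\sup|\det D\phi|$, both of which tend to $1$ under the hypothesis.
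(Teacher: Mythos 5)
Your proposal is correct and is essentially the argument the paper is pointing to: the paper does not prove this theorem but cites \cite{Courant1989} (p.\,423), and the remark immediately following the statement observes that ``the proof of this result uses the variational formulation of the eigenvalue problem'' and that the variational eigenvalues coincide with the true ones on bounded domains --- exactly your min--max setup. You have correctly identified why the comparison must be made at the level of the bilinear form $\int_\Omega (A_\phi\nabla u)\cdot\nabla u$ with $A_\phi = D\phi^{-1}D\phi^{-t}|\det D\phi|$ (so that only $D\phi^{-1}$ and $\det D\phi$, not their derivatives, appear and the $W^{1,\infty}$ hypothesis suffices), and why the multiplicative rather than additive perturbation of the Rayleigh quotient is what delivers uniformity in $j$. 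The only thing worth adding is that the paper leans on this uniformity-in-$j$ feature in \cref{lemma:noslice} without re-deriving it, so your step (iv) is the load-bearing point, and your remark that the min--max presentation ``makes the uniformity transparent'' is well taken.
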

\begin{remark}
    Strictly speaking, this result applies to $-\Delta$, not $-\Delta^{-1}$.
    But we have seen that the eigenvalue problems for both operators are equivalent
    (see \cref{eqn:equiveval} and the surrounding discussion in the Introduction).
    The proof of this result uses the variational formulation of the eigenvalue problem.
    As \cite{Lamberti2005} point out, the ``variational eigenvalues'' need not coincide
    with the true eigenvalues.
    However, this is the case in bounded domains.
    (More generally, when the embedding $H_0^1(\Omega)\hookrightarrow L^2(\Omega)$ is compact;
    see their discussion on p.\,289.)
\end{remark}
\begin{lemma}\label{lemma:noslice}
    Let $j,k\in F$ be such that $\lambda_j\neq\lambda_k$.
    Assume that \[\gamma:=\inf\{\lambda_l-\lambda_m:l>m>0\}>0.\]
    Then, $\abs{\tilde{\lambda}_j - \tilde{\lambda}_k} > \gamma/2$,
    provided that $\normw{\operatorname{id}-\phi}{1,\infty}{\Omega}$ is small enough.
\end{lemma}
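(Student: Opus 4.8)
The plan is to treat the gap $\abs{\tilde\lambda_j - \tilde\lambda_k}$ as a perturbation of the gap $\abs{\lambda_j - \lambda_k}$, and to control the perturbation uniformly over the finitely many pairs of indices in $F$ by invoking \cref{thm:CourantHilbert}. First I would fix the two indices $j,k\in F$ with $\lambda_j\neq\lambda_k$; without loss of generality assume $\lambda_j<\lambda_k$, so that $\lambda_k-\lambda_j\geq\gamma$ by the hypothesis $\gamma=\inf\{\lambda_l-\lambda_m : l>m>0\}>0$. The goal is then to show $\tilde\lambda_k-\tilde\lambda_j>\gamma/2$ once $\normw{\operatorname{id}-\phi}{1,\infty}{\Omega}$ is sufficiently small.

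The key step is to choose the $\eta$ in \cref{thm:CourantHilbert} appropriately. Since that theorem gives multiplicative control, $\abs{\tilde\lambda_l/\lambda_l - 1}<\eta$ for all $l$, I would write $\tilde\lambda_l = \lambda_l(1+\theta_l)$ with $\abs{\theta_l}<\eta$, so that
\[
    \tilde\lambda_k - \tilde\lambda_j = (\lambda_k-\lambda_j) + (\lambda_k\theta_k - \lambda_j\theta_j)
    \geq \gamma - \eta(\lambda_k+\lambda_j).
\]
Here the only slightly delicate point is that $\lambda_k+\lambda_j$ must be bounded by a constant depending only on $F$ (and $\Omega$), not on $\phi$; this is automatic because $F$ is a fixed finite set and the $\lambda_l$ are the (fixed) eigenvalues of $-\Delta$ on $\Omega$. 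Writing $\Lambda_F := \max_{l\in F}\lambda_l$, we get $\tilde\lambda_k-\tilde\lambda_j \geq \gamma - 2\eta\Lambda_F$. Choosing $\eta := \gamma/(4\Lambda_F)$ yields $\tilde\lambda_k-\tilde\lambda_j\geq\gamma/2$, and in fact the strict inequality $>\gamma/2$ follows if one is slightly more careful (e.g.\ take $\eta<\gamma/(4\Lambda_F)$, or note the bound is strict whenever $\eta$ is). \cref{thm:CourantHilbert} then supplies an $\epsilon>0$ such that $\normw{\operatorname{id}-\phi}{1,\infty}{\Omega}\leq\epsilon$ forces $\abs{\tilde\lambda_l/\lambda_l-1}<\eta$ for every $l\in\N$, which is what we need.

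I do not expect a serious obstacle here: the argument is essentially a one-line estimate once \cref{thm:CourantHilbert} is in hand. The only thing to be careful about is the direction of the inequality — \cref{thm:CourantHilbert} controls the relative error of each eigenvalue, and one must check that a small relative perturbation of two \emph{separated} eigenvalues cannot close the gap by more than half, which is exactly the computation above. One should also note that the same $\epsilon$ works for all pairs $j,k\in F$ simultaneously, since $F$ is finite: take the minimum of the finitely many $\epsilon$'s, or simply observe that a single $\eta$ depending only on $\gamma$ and $\Lambda_F$ handles every pair at once.
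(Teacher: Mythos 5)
Your proof is correct and takes essentially the same route as the paper: invoke \cref{thm:CourantHilbert} for a uniform multiplicative bound $\abs{\tilde\lambda_l/\lambda_l-1}<\eta$, expand the perturbed gap, bound the error term by $2\eta\max_{l\in F}\lambda_l$, and choose $\eta$ small in terms of $\gamma$ and $\max_{l\in F}\lambda_l$. Your decomposition $\tilde\lambda_l=\lambda_l(1+\theta_l)$ is slightly cleaner algebra than the paper's ratio manipulation, but the estimate and the role of $F$'s finiteness are identical.
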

\begin{proof}
Without loss of generality, we may assume that $\lambda_j > \lambda_k$,
and set $\delta := \lambda_j-\lambda_k\geq\gamma$.
Note that \[
    \tilde{\lambda}_j - \tilde{\lambda}_k
    = \lambda_j\cdot \frac{\tilde{\lambda}_j}{\lambda_j}
        - \lambda_k\cdot\frac{\tilde{\lambda}_k}{\lambda_k}
    = (\lambda_j-\lambda_k)\cdot\frac{\tilde{\lambda}_j}{\lambda_j}
        + \lambda_k\left(\frac{\tilde{\lambda}_j}{\lambda_j}
            -\frac{\tilde{\lambda}_k}{\lambda_k}\right).
\] By \cref{thm:CourantHilbert}, \[
    1-\eta < \frac{\tilde{\lambda}_j}{\lambda_j} < 1+\eta,
    \quad \abs{\frac{\tilde{\lambda}_j}{\lambda_j}
    -\frac{\tilde{\lambda}_k}{\lambda_k}} < 2\eta \qand \abs{\lambda_k} \leq \max_{l\in F}\abs{\lambda_l}.
\] Hence,
\[
    \tilde{\lambda}_j - \tilde{\lambda}_k
        \geq (\lambda_j-\lambda_k)(1-\eta) - 2\max_{l\in F}\abs{\lambda_l} \eta
        \geq \gamma -\eta(\delta+2\max_{l\in F}\abs{\lambda_l}),
\] which is larger than $\gamma/2$, provided that $\eta<\frac{\gamma}{\delta+2\max_{l\in F}\abs{\lambda_l}}$.
\end{proof}

Since there are only finitely many indices in $F$, it follows that, for a fixed $F$
and a sufficiently small perturbation $\phi$,
the multiplicities of the eigenvalues of $\Delta_{\tilde{\Omega}}$ (in $F$)
will not decrease with respect to the multiplicities of the eigenvalues of $\Delta$ (in $F$).

Combining \cref{lemma:noslice} with \cref{lemma:Tphieigenvals},
we deduce that $F$ does not split eigenvalues for $T$ nor $T_\phi$.
\begin{cor}\label{cor:noslice}
    If $\phi$ is $p$-admissible and
    $\normw{\operatorname{id}-\phi}{1,\infty}{\Omega}$ sufficiently small,
    then $F$ will not split eigenvalues of $T$ nor $T_\phi$.
\end{cor}
\section{Bounds on the norms of the operators}\label{sec:operators}

\subsection{Auxiliary bounds}
We need bounds on the operators $C_\phi\colon L^p(\tilde{\Omega})\to L^p(\Omega)$
and $C_{\phi}^t\colon L^p(\Omega)\to L^p(\tilde{\Omega})$.

\begin{lemma}\label{lem:Cphibounds}
    If $\phi$ is $p$-admissible, then
    \begin{equation*}\label{eqn:Cphiboud}
        \norm{C_\phi}_{L^p(\tilde{\Omega})\to L^p(\Omega)}
            \leq \frac{1}{\inf_{\Omega}\abs{\det{D\phi}}^{1/p}},
    \end{equation*}
    and, with $1/p+1/q=1$,
    \begin{equation}\label{eqn:Cphitbound}
        \norm{C_{\phi}^t}_{L^p(\Omega)\to L^p(\tilde{\Omega})}
            \leq \frac{1}{\inf_\Omega\abs{\det{D\phi}}^{1/q}}.
    \end{equation}
\end{lemma}
\begin{proof}
    For any $v\in L^p(\tilde{\Omega})$ we have
    \begin{align*}
        \norml{C_\phi v}{p}{\Omega}^p &=
        \int_{\Omega}\abs{v(\phi(x))}^p\dm{x}\\
        &= \int_{\Omega}\abs{v(\phi(x))}^p
            \frac{\abs{\det{D\phi}}}{\abs{\det{D\phi}}}\dm{x}\\
        &\leq \frac{1}{\inf_\Omega\abs{\det{D\phi}}}
        \int_{\Omega}\abs{v(\phi(x))}^p\abs{\det{D\phi}}\dm{x}\\
        &= \frac{1}{\inf_\Omega\abs{\det{D\phi}}}
        \int_{\tilde{\Omega}}\abs{v(y)}^p\dm{y},
    \end{align*}
    by the change of variables formula, and
    \eqref{eqn:Cphitbound} immediately follows by a simple duality argument.
\end{proof}

\begin{cor}\label{cor:DeltaSphibounds}
    If $\phi$ is $p$-admissible, then
    \[
        \norm{\mathcal{L}_{\phi}^{-1}}_{L^p(\Omega)\to L^p(\Omega)}
            \lesssim \frac{1}{\inf_\Omega{\abs{\det{D\phi}}}}.
    \]
\end{cor}
\begin{proof}
    Recall that, by \cref{eqn:Cphitinv}, we have \[
        \mathcal{L}_{\phi}^{-1} = C_\phi \circ \Delta_{\tilde{\Omega}}^{-1}\circ C_\phi^t.
    \] Hence, the result follows from \cref{lem:Cphibounds} and the estimate
    \[\norm{\Delta_{\tilde{\Omega}}^{-1}}_{L^p(\tilde{\Omega})\to L^p(\tilde{\Omega})}\lesssim 1,\]
    by the $p$-admissibility of $\tilde{\Omega}$.
\end{proof}

Next we need some technical lemmas concerning $\phi$ and its derivatives.
We will write $\phi = \mathrm{id} + (f,g)$ to mean \[
    \phi(x,y) = (x + f(x,y), y + g(x,y)).
\]
We want to consider $\phi$ ``close to the identity'' in the sense that
$\normw{\mathrm{id}-\phi}{2,\infty}{\Omega;\R^2}$ is small.
In other words, $f,g$ and their derivatives up to second order have small $L^\infty$ norms.
\begin{lemma}\label{lemma:detDphi}
    Suppose that $\normw{\mathrm{id}-\phi_k}{1,\infty}{\Omega;\R^2}\to 0$ as $k\to\infty$.
    Then,
    \[\norml{1-\det{D\phi_k}}{\infty}{\Omega}\to 0.\]
\end{lemma}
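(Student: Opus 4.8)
The plan is to compute $\det D\phi_k$ explicitly in terms of $f_k,g_k$ and show each term not equal to $1$ is controlled by $\norm{\mathrm{id}-\phi_k}_{W^{1,\infty}}$. Writing $\phi_k = \mathrm{id} + (f_k,g_k)$, the Jacobian matrix is
\[
    D\phi_k = \begin{pmatrix} 1 + \partial_x f_k & \partial_y f_k \\ \partial_x g_k & 1 + \partial_y g_k \end{pmatrix},
\]
so that
\[
    \det D\phi_k = (1+\partial_x f_k)(1+\partial_y g_k) - \partial_y f_k\,\partial_x g_k
        = 1 + \partial_x f_k + \partial_y g_k + \partial_x f_k\,\partial_y g_k - \partial_y f_k\,\partial_x g_k.
\]
Hence
\[
    1 - \det D\phi_k = -\bigl(\partial_x f_k + \partial_y g_k\bigr) - \bigl(\partial_x f_k\,\partial_y g_k - \partial_y f_k\,\partial_x g_k\bigr).
\]

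First I would bound the linear part: $\norml{\partial_x f_k + \partial_y g_k}{\infty}{\Omega} \leq \norml{\partial_x f_k}{\infty}{\Omega} + \norml{\partial_y g_k}{\infty}{\Omega} \leq 2\norm{\mathrm{id}-\phi_k}_{W^{1,\infty}(\Omega;\R^2)}$, which tends to $0$ by hypothesis. Then I would bound the quadratic part: each of $\norml{\partial_x f_k\,\partial_y g_k}{\infty}{\Omega}$ and $\norml{\partial_y f_k\,\partial_x g_k}{\infty}{\Omega}$ is at most $\norm{\mathrm{id}-\phi_k}_{W^{1,\infty}(\Omega;\R^2)}^2$, using $\norml{uv}{\infty}{\Omega} \leq \norml{u}{\infty}{\Omega}\norml{v}{\infty}{\Omega}$ and the fact that every first-order partial derivative of $f_k$ and $g_k$ is dominated by the $W^{1,\infty}$ norm of $\mathrm{id}-\phi_k$. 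Combining via the triangle inequality gives
\[
    \norml{1-\det D\phi_k}{\infty}{\Omega} \leq 2\norm{\mathrm{id}-\phi_k}_{W^{1,\infty}(\Omega;\R^2)} + 2\norm{\mathrm{id}-\phi_k}_{W^{1,\infty}(\Omega;\R^2)}^2,
\]
and the right-hand side tends to $0$ as $k\to\infty$.

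There is no real obstacle here; this is an elementary estimate. The only point requiring a word of care is that the determinant expression is valid a.e.\ since $\phi_k$ is only weakly differentiable, but the $L^\infty$ bounds on the weak derivatives are exactly what the $W^{1,\infty}$ norm controls, so the pointwise-a.e.\ identity for $\det D\phi_k$ together with the a.e.\ bounds suffices. One could equivalently phrase the whole argument quantitatively (without sequences) as $\norml{1-\det D\phi}{\infty}{\Omega} \lesssim \norm{\mathrm{id}-\phi}_{W^{1,\infty}} + \norm{\mathrm{id}-\phi}_{W^{1,\infty}}^2$, from which the stated convergence is immediate; I would likely record it in this sharper form for later use.
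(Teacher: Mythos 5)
Your proof is correct and follows essentially the same approach as the paper: both expand $\det D\phi_k$ directly in terms of the entries of $D\phi_k - I$ and observe that the deviation from $1$ consists of linear and quadratic contributions each controlled by $\normw{\mathrm{id}-\phi_k}{1,\infty}{\Omega;\R^2}$. Your quantitative reformulation, $\norml{1-\det D\phi}{\infty}{\Omega} \lesssim \normw{\mathrm{id}-\phi}{1,\infty}{\Omega;\R^2} + \normw{\mathrm{id}-\phi}{1,\infty}{\Omega;\R^2}^2$, is a cleaner record of the same estimate than the paper's qualitative convergence statement.
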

\begin{proof}
    Let \[
        D\phi_k = \left(\begin{matrix}
            a_k & b_k\\
            c_k & d_k
        \end{matrix}\right).
    \] Then, by assumption, \[
        D\phi_k-I = \left(\begin{matrix}
            a_k-1 & b_k\\
            c_k & d_k-1
        \end{matrix}\right) \to 0 \qtext{in} L^\infty.
    \] Hence, $\det{(D\phi_k-I)} \to 0$
    and it follows that $1-\det{(D\phi_k-I)} \to 0$.
\end{proof}
\begin{remark}
    Note that we only required $\mathrm{id}-\phi\in W^{1,\infty}(\Omega;\R^2)$ for this lemma.
    The next result will require convergence in $W^{2,\infty}$.
\end{remark}
\begin{lemma}\label{lemma:Kphi}
    Suppose that $\normw{\mathrm{id}-\phi_k}{2,\infty}{\Omega;\R^2}\to 0$ as $k\to\infty$
    and write \[
        M_k := D\phi_k^{-1} D\phi_k^{-t}\abs{\det{D\phi}} - I \equiv
        \left(\begin{matrix}
            a_k & b_k\\
            c_k & d_k
        \end{matrix}\right).
    \] Then, as $k\to\infty$,
    \begin{enumerate}
        \item $\norml{M_k}{\infty}{\Omega} \to 0$, and
        \item $\norml{\frac{\partial a_k}{\partial x}+\frac{\partial b_k}{\partial y}}{\infty}{\Omega}$,
        $\norml{\frac{\partial c_k}{\partial x}+\frac{\partial d_k}{\partial y}}{\infty}{\Omega} \to 0$.
    \end{enumerate}
\end{lemma}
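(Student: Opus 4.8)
The plan is to expand the matrix product $D\phi_k^{-1}D\phi_k^{-t}\abs{\det D\phi_k}$ explicitly in terms of the entries of $D\phi_k$ and then differentiate, using the hypothesis $\normw{\mathrm{id}-\phi_k}{2,\infty}{\Omega;\R^2}\to 0$ at every stage. Write $D\phi_k = I + E_k$ with $E_k = \begin{pmatrix} f_x & f_y\\ g_x & g_y\end{pmatrix}$, so that all entries of $E_k$ and all their first-order partials tend to $0$ in $L^\infty$. For a $2\times 2$ matrix $A = \begin{pmatrix} a & b\\ c& d\end{pmatrix}$ one has the clean identity $A^{-1}A^{-t}\det A = \frac{1}{\det A}\begin{pmatrix} b^2 + d^2 & -(ab+cd)\\ -(ab+cd) & a^2 + c^2\end{pmatrix}$; applying this with $A = D\phi_k$ gives a completely explicit formula for $M_k + I$ whose entries are rational functions of the entries of $D\phi_k$, with denominator $\det D\phi_k$.

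For part (i), I would substitute $D\phi_k = I + E_k$ into this explicit formula. Since $\det D\phi_k \to 1$ uniformly by \cref{lemma:detDphi} (in particular $\det D\phi_k$ is bounded away from $0$ for large $k$, so dividing is harmless), and since the numerator entries $b^2+d^2$, $a^2+c^2$, $-(ab+cd)$ converge uniformly to $1$, $1$, $0$ respectively as $E_k\to 0$, we get $M_k = (D\phi_k^{-1}D\phi_k^{-t}\det D\phi_k) - I \to 0$ in $L^\infty(\Omega)$. This is essentially a continuity-of-algebraic-operations argument and carries no real difficulty beyond bookkeeping.

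Part (ii) is the substantive step. The quantities $\partial_x a_k + \partial_y b_k$ and $\partial_x c_k + \partial_y d_k$ are the rows of the divergence of the matrix field $M_k$. Differentiating the explicit rational expression for $M_k$ via the quotient and product rules produces, for each such combination, a finite sum of terms each of which is a product of: (a) a bounded factor (a power of $1/\det D\phi_k$, which is uniformly bounded for large $k$, times polynomial expressions in the entries of $E_k$, which are uniformly bounded), and (b) at least one factor that is either an entry of $E_k$ or a first-order partial derivative of an entry of $E_k$ — hence tends to $0$ in $L^\infty$. The key structural point to verify is that there is no ``zeroth-order'' leftover: when $E_k = 0$ the matrix $M_k$ is identically $I$, a constant, so its divergence vanishes identically; therefore every term in the expansion of $\partial_x a_k + \partial_y b_k$ must carry a factor drawn from $E_k$ or $DE_k$. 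I would make this precise either by the algebraic observation just stated (the divergence is an analytic function of $(E_k, DE_k)$ vanishing when both arguments vanish, with bounded coefficients on the relevant range of $E_k$), or, if a cleaner route is wanted, by exploiting a cofactor/Piola-type identity: the columns of $\det(A)\,A^{-t}$ are divergence-free for any sufficiently smooth $A$ (the Piola identity $\operatorname{div}(\operatorname{cof} D\phi_k) = 0$), which can be used to cancel the most dangerous-looking terms before estimating. Either way, combining (a) and (b) and using $\normw{\mathrm{id}-\phi_k}{2,\infty}{\Omega}\to 0$ gives the claimed convergence.

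The main obstacle is purely organisational: keeping the differentiated rational expression under control and confirming rigorously that no term escapes the ``contains a small factor'' pattern. Invoking the Piola identity for $\operatorname{cof} D\phi_k$ is the slickest way to see this and I would lead with it; the brute-force quotient-rule expansion is the fallback and still works since second derivatives of $\phi_k - \mathrm{id}$ are available and bounded.
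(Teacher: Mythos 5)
Your fallback argument is essentially the paper's proof: expand $D\phi_k$ about the identity, write out the $2\times 2$ inverse explicitly so that $M_k$ is a rational function of $Df_k$, $Dg_k$ with denominator $\det D\phi_k$ bounded away from zero (via \cref{lemma:detDphi}), observe the numerator has no constant term, and then note that each differentiated entry is again a rational expression whose numerator is a homogeneous polynomial in the first and second derivatives of $f_k$, $g_k$ and hence vanishes in $L^\infty$ under the $W^{2,\infty}$ hypothesis. One small slip: when $E_k = 0$ you have $D\phi_k = I$ and thus $M_k = I\cdot I\cdot 1 - I = 0$, not $I$; your argument only needs $M_k$ to be constant there, so nothing breaks, but the structural point is precisely that the divergence has no zeroth-order term because $M_k$ itself vanishes at the identity.

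Your Piola-type suggestion is a genuine alternative which the paper does not pursue, and it is arguably cleaner for part (ii). Since (for $\det D\phi_k>0$) one has $M_k + I = (D\phi_k)^{-1}\operatorname{cof}(D\phi_k)$, the product rule applied row-wise gives
\[
\operatorname{div}(M_k)_i = \sum_{j,\ell}\partial_j\bigl[(D\phi_k)^{-1}\bigr]_{i\ell}\bigl[\operatorname{cof}(D\phi_k)\bigr]_{\ell j} + \sum_{\ell}\bigl[(D\phi_k)^{-1}\bigr]_{i\ell}\,\operatorname{div}\bigl(\operatorname{cof} D\phi_k\bigr)_{\ell},
\]
and the second sum vanishes identically by the Piola identity. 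What survives is a contraction of $D\bigl[(D\phi_k)^{-1}\bigr]$ (which is $O(\normw{\mathrm{id}-\phi_k}{2,\infty}{\Omega;\R^2})$ in $L^\infty$) against the bounded matrix $\operatorname{cof}(D\phi_k)$, which gives (ii) with almost no bookkeeping. What this route buys is a structural reason for the absence of constant terms, rather than observing it after a brute-force quotient-rule expansion; what it costs is invoking the Piola identity (which does require $\phi_k\in W^{2,\infty}$, exactly the regularity assumed).
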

\begin{proof}
    Recall that $\phi_k = \mathrm{id} + (f_k,g_k)$.
    For simplicity, we will drop the $k$ subscripts on $\phi$, $f$ and $g$,
    writing $f_x := \frac{\partial f_k}{\partial x}$,
    and so on.
    Thus, \eg, ``$f_x\to 0$'' will mean ``$\frac{\partial f_k}{\partial x}\to 0$ as $k\to\infty$''.
    Note that our assumption means precisely that $f_x,f_y,g_x,g_y\to 0$,
    as well as all second partial derivatives.

    By \cref{lemma:detDphi}, eventually $\det{D\phi}>0$ a.e.,
    so we may drop the absolute value signs.
    A simple calculation shows that \[
        D\phi^{-1} D\phi^{-t} = \frac{1}{(\det{D\phi})^2}\left(\begin{matrix}
            g_x^2 + (1+g_y)^2 & -(f_y+g_x+f_xg_y+f_yg_x)\\
            -(f_y+g_x+f_xg_y+f_yg_x) & (1+f_x)^2+f_y^2
        \end{matrix}\right)
    \] and that \[
        \det{D\phi} = 1 + f_x + g_y + f_xg_y + f_yg_x.
    \] Hence, for example, the top-left entry of $M_k$ is\[
        a_k = \frac{g_x^2+g_y^2-g_y-f_x-f_xg_y-f_yg_x}{\det{D\phi}}.
    \] By \cref{lemma:detDphi}, the denominator remains bounded,
    while the numerator vanishes as $k\to\infty$ by assumption.
    Similar calculations for $b,c$ and $d$ establish (i).

    For (ii), we differentiate the last display with respect to $x$ to find \[
        \frac{\partial a_k}{\partial x} = \frac{p(Df,Dg,D^2f,D^2g)}{(\det{D\phi})^2},
    \] where $p$ is a homogeneous polynomial in the first and second derivatives of $f$ and $g$.
    Thus, as above, $\partial a_k/\partial x\to 0$. We omit the remaining cases.
\end{proof}

\subsection{Bounds on $T-T_\phi$}
With these estimates, we can now control $T-T_\phi$.
\begin{prop}\label{prop:DeltaSdiff}
    If $\phi\colon \Omega\to \tilde{\Omega}$ is $p$-admissible, then
    \begin{align*}
        \norm{T-T_\phi}_{L^p(\Omega)\to L^p(\Omega)}
        \lesssim \frac{1}{\inf_\Omega \abs{\det{D\phi}}}\left(
                \norml{1-\abs{\det{D\phi}}}{\infty}{\Omega}+K_\phi\right),
    \end{align*}
    where $K_\phi$ is a constant depending on $D\phi$ and its derivatives
    which vanishes as $\normw{\mathrm{id}-\phi}{2,\infty}{\Omega}\to 0$.
\end{prop}
\begin{proof}
    Fix $u\in L^p(\Omega)$.
    Recall that $T_\phi = \mathcal{L}_\phi^{-1}\circ\mathcal{J}_\phi$.
    Hence, it follows from \cref{cor:DeltaSphibounds} that
    \begin{align}
        \norml{Tu-T_\phi u}{p}{\Omega}
        &\leq \norm{\mathcal{L}_\phi^{-1}}_{L^p(\Omega)\to L^p(\Omega)}
            \norml{\mathcal{J}_\phi u-\mathcal{L}_\phi\Delta^{-1}u}{p}{\Omega}\notag\\
        &\lesssim \frac{1}{\inf_{\Omega}\abs{\det{D\phi}}}
        \norml{\mathcal{J}_\phi u-\mathcal{L}_\phi\Delta^{-1}u}{p}{\Omega}\label{eqn:TTphi}.
    \end{align}
    To bound the last $L^p$ norm, we use duality.
    For convenience, let $Au := \mathcal{J}_\phi u-\mathcal{L}_\phi \Delta^{-1}u$,
    and compute, for $w\in \Cc(\Omega)$ with $\norml{w}{q}{\Omega}\leq 1$:
    \begin{align}\label{eqn:Auw}
        \pair{Au,w}
        &= \pair{\mathcal{J}_\phi u,w} -\pair{\mathcal{L}_\phi\Delta^{-1}u,w}\notag\\
        \begin{split}
            &= \int_{\Omega}uw\abs{\det{D\phi}}\dm{x}
            -\pair{\mathcal{L}_\phi\Delta^{-1}u,w}.
        \end{split}
    \end{align}
    We now unravel the definition of $\mathcal{L}_\phi$, integrate by parts and change variables:
    \begin{align*}
        \pair{\mathcal{L}_\phi[\Delta^{-1}u],w}
            &= \pair{C_{\phi^{-1}}^t\Delta_{\tilde{\Omega}}C_\phi^{-1}[\Delta^{-1} u],w}\\
            &= \pair{\Delta_{\tilde{\Omega}}C_{\phi^{-1}}[\Delta^{-1} u], C_{\phi^{-1}}w}\\
            &= -\int_{\tilde{\Omega}}D_y\left[\Delta^{-1} u(\phi^{-1}(y))\right]\cdot
                D_y\left[w(\phi^{-1}(y))\right]\dm{y}\\
            &= -\int_{\tilde{\Omega}}D_x (\Delta^{-1} u)(\phi^{-1}(y))D\phi^{-1}(y)D\phi^{-t}(y)(D_xw)^{t}(\phi^{-1}(y))\dm{y}\\
            &= -\int_{\Omega}D(\Delta^{-1}u)D\phi^{-1}(D\phi^{-1})^{t}(Dw)^{t}\abs{\det{D\phi}}\dm{x},
    \end{align*}
    where for a matrix $M$, $M^{-t} = (M^{-1})^t$, and $Dv \cdot Dw = Dv(Dw)^t$.
    We will now add and subtract
    \[
        \pm\int_{\Omega}D(\Delta^{-1}u)\cdot Dw\dm{x}
    \] from \eqref{eqn:Auw} to obtain:
    \begin{align}
        \pair{Au,w} = &\int_{\Omega}uw\abs{\det{D\phi}}\dm{x} + \int_{\Omega}D(\Delta^{-1}u)\cdot Dw\dm{x}\label{eqn:I}\\
        \begin{split}
            &\int_{\Omega}D(\Delta^{-1}u)D\phi^{-1}D\phi^{-t}(Dw)^{t}\abs{\det{D\phi}}\dm{x}\\
            &\quad -\int_{\Omega}D(\Delta^{-1}u)\cdot Dw\dm{x}.\label{eqn:II}
        \end{split}
    \end{align}

    To estimate \eqref{eqn:I}, we integrate by parts in the second summand to see that
    \begin{align}
        \eqref{eqn:I} &= \int_{\Omega}uw\abs{\det{D\phi}}\dm{x}
            -\int_{\Omega}\Delta(\Delta^{-1}u)w\dm{x}\notag\\
            &= \int_{\Omega}uw(\abs{\det{D\phi}}-1)\dm{x}\notag\\
            &\leq \norml{1-\abs{\det{D\phi}}}{\infty}{\Omega}\norml{u}{p}{\Omega}\norml{w}{q}{\Omega}.\label{eqn:Iestimate}
    \end{align}

    For the second term, note that
    \[
        \eqref{eqn:II} = \int_\Omega D(\Delta^{-1}u)\left\{D\phi^{-1}(D\phi^{-1})^t\abs{\det{D\phi}}-I\right\}\cdot Dw\dm{x},
    \] where $I$ denotes the $2\times 2$ identity matrix.
    Writing the expression in braces as \[
        M_\phi = \left(\begin{matrix}
            a & b\\
            c & d
        \end{matrix}\right)
    \] and integrating by parts yields
    \[
        \eqref{eqn:II} =
            -\int_{\Omega}\left[(a_x+b_y, c_x+d_y)\cdot D(\Delta^{-1}u) +
            M_\phi\cdot D^2(\Delta^{-1}u)\right]w\dm{x},
    \] where $A\cdot B$ is the dot product of the matrices $A$ and $B$,
    seen as vectors in $\R^4$.
    Then,
    \begin{align}
        \eqref{eqn:II} &\lesssim K_\phi\left(\norml{D^2(\Delta^{-1}u)}{p}{\Omega}
            +\norml{D(\Delta^{-1}u)}{p}{\Omega}\right)\norml{w}{q}{\Omega}\notag\\
        &\leq K_\phi\normw{\Delta^{-1}u}{2,p}{\Omega}\norml{w}{q}{\Omega}\notag\\
        &\lesssim K_\phi\norml{u}{p}{\Omega}\norml{w}{q}{\Omega}\label{eqn:IIestimate}.
    \end{align}
    The last line follows from the boundedness of $\Delta^{-1}\colon L^p(\Omega)\to X_p(\Omega)$,
    while $K_\phi$ is a constant depending on $D\phi$ and its derivatives,
    which vanishes in the required manner by \cref{lemma:Kphi}.

    Hence, combining the estimates \eqref{eqn:Iestimate} and \eqref{eqn:IIestimate}
    with \eqref{eqn:Auw} and \eqref{eqn:TTphi} yields the result.
\end{proof}
\begin{remark}
    If $\phi\colon \Omega\to\tilde{\Omega}$ is conformal,
    then we can regard it as a holomorphic map from $\C$ to $\C$
    whose complex derivative $\phi'(x+\i y)$ does not vanish at any $(x,y)\in \Omega$.
    In such a case we have \[
        \det{D\phi} = \abs{\phi'}^2 \qand
        \frac{1}{\abs{\det{D\phi}}}D\phi D\phi^t = I,
    \] whence it follows that \[
        D\phi^{-1}D\phi^{-t}\abs{\det{D\phi}} = I,
    \] and so \cref{eqn:Auw} simplifies to
    \begin{align*}
        \pair{Au,w} &= \int_{\Omega}uw\abs{\det{D\phi}}\dm{x}+\int_{\Omega}D[\Delta^{-1}u]\cdot Dw\dm{x}\\
                    &= \int_{\Omega}uw\left(\abs{\det{D\phi}}-1\right)\dm{x}.
    \end{align*}
\end{remark}
\begin{cor}\label{cor:phihol}
    Suppose that $\phi\colon\Omega\to\tilde{\Omega}$ is $p$-admissible, conformal and differentiable.
    Then, \[
        \norm{T-T_\phi}_{L^p(\Omega)\to L^p(\Omega)}
        \lesssim \frac{1}{\inf_\Omega \abs{\det{D\phi}}}\norml{1-\abs{\det{D\phi}}}{\infty}{\Omega}.
    \]
\end{cor}

\section{Continuity of projections under domain perturbations}\label{sec:main}
We will now combine the estimates for $T-T_\phi$
with Kato's integral formula to obtain $L^p$ a continuity theorem
for the projections as $\mathrm{id}-\phi\to 0$ in $W^{2,\infty}$,
provided one can establish $L^p$ bounds on the resolvents on $T=\Delta^{-1}$.
In the next section we will show that this is possible when $\Omega$ is a square of rectangular domain.

Before we state the theorem, we note that
the projection operators for $\Delta$ and its inverse $T$
are identical, since they share the same eigenfunctions;
and similarly for $\Delta_\phi$ and $T_\phi$.

\begin{theorem}\label{thm:deltaproj2}
    Suppose that $\phi\colon \Omega\to\tilde{\Omega}$ is $p$-admissible,
    and $F$ does not split eigenvalues of $T$ or $T_\phi$.
    Suppose further that
    \begin{equation}\label{eqn:resolventhyp}
        \norm{(T-z)^{-1}}_{L^p\to L^p} \leq C(T,z)
    \end{equation}
    for all $z\in\C\setminus\sigma(\tilde{T})$.
    Then,
    \begin{equation*}
        \norm{P_{F}-P_{F}^\phi}_{L^p(\Omega)\to L^p(\Omega)}
        \lesssim
        C(F,\Omega,\phi)\left(\frac{\norml{1-\abs{\det{D\phi}}}{\infty}{\Omega}+K_\phi}{\inf_{\Omega}\abs{\det{D\phi}}}\right)
    \end{equation*}
    where $K_\phi$ is the constant from \cref{lemma:Kphi}
    and the implicit constants depend only on $p$.

    Furthermore, if $\phi_k$ is a sequence of $p$-admissible transformations,
    such that $\normw{\mathrm{id}-\phi_k}{2,\infty}{\Omega}\to 0$,
    then $\norm{P_{F}-P_{F}^{\phi_k}}_{L^p(\Omega)\to L^p(\Omega)}\to 0$
    for each fixed $F$ that does not split eigenvalues of $\Delta$.
\end{theorem}
\begin{proof}
    Under the assumptions on $\Omega$ and $\phi$,
    $T$ and $T_\phi$ are well-defined compact operators on $L^2(\Omega)$.
    Label their eigenvalues as $(\mu_j)_{j\in\N}$ and $(\tilde{\mu}_j)_j$,
    respectively, and recall that $\mu_j=1/\lambda_j$, etc.

    For any $u \in \Cc(\Omega)$, we have the $L^2(\Omega)$ expansions \[
        T u = \sum_{j\in\N}\mu_j\pair{u_j,u}_{\Omega}u_j \qand
        T_\phi = \sum_{j\in\N}\tilde{\mu}_j Q_\phi[u^{\phi}_j,u] u^{\phi}_j.
    \] For any $z\in\C$ distinct from any $\mu_j$ and $\tilde{\mu}_j$, we have (still in $L^2$) \[
        (T-z)^{-1} u = \sum_{j\in\N}\frac{\pair{u_j,u}_{\Omega}}{\mu_j-z}u_j \qand
        (T_\phi-z)^{-1} = \sum_{j\in\N}\frac{Q_\phi[u^{\phi}_j,u]}{\tilde{\mu}_j-z}u^{\phi}_j.
    \]

    Now select a contour $\gamma$ in the complex plane enclosing precisely the
    $\mu_j$ and $\tilde{\mu_j}$ with $j\in F$ and no others.
    This is possible by \cref{thm:CourantHilbert}.

    By Kato's projection formula,
    \begin{align}\label{eqn:projdiff}
        P_Fu - P_F^{\phi}u &= \frac{1}{\tpi}\int_{\gamma}\left\{(T-z)^{-1}-(T_\phi-z)^{-1}\right\}u\dm{z}\nonumber\\
        &= \frac{1}{\tpi}\int_\gamma(T_\phi-z)^{-1}(T_\phi-T)(T-z)^{-1}u\dm{z}
    \end{align}

    We want to replace the first resolvent, $(T_\phi-z)^{-1}$, with a truncated version:
    \[
        R^\phi_F(z)f:= \sum_{j\in F}\frac{Q_\phi[u^{\phi}_j,u]}{\tilde{\mu}_j-z}u_j^\phi.
    \] Indeed, we claim that:
    \begin{equation}\label{eqn:CIFclaim}
        \frac{1}{\tpi}\int_{\gamma}(T_\phi-z)^{-1}f\dm{z}
            = \frac{1}{\tpi}\int_{\gamma}R_F^\phi(z)f\dm{z} \qfa f \in L^2(\Omega).
    \end{equation}
    It suffices to check that \cite[VII,\S4]{Conway2007} \[
        \frac{1}{\tpi}\int_{\gamma}Q_\phi[(T_\phi-z)^{-1}f,g]\dm{z}
        = \frac{1}{\tpi}\int_{\gamma}Q_\phi[R_F^\phi(z)f,g]\dm{z}
    \] for all $f$ and $g$ in $L^2(\Omega)$.

    For $f\in L^2(\Omega)$, we can write $f = \sum_j Q_\phi[u^\phi_j, f]u^\phi_j$
    and apply the bounded operator $(T_\phi-z)^{-1}$:
    \begin{align*}
        \frac{1}{\tpi}\int_\gamma Q_\phi[(T_\phi-z)^{-1}f,g]\dm{z}
        &= \frac{1}{\tpi}\int_\gamma\sum_j\frac{Q_\phi[(T_\phi-z)^{-1}f,g]}{\tilde{\mu}_j-z}Q_\phi[u_j^\phi,g]\dm{z}\\
        &= \sum_jQ_\phi[u_j^\phi,f]Q_\phi[u_j^\phi,g]\left(\frac{1}{\tpi}\int_{\gamma}\frac{1}{\tilde{\mu}_j-z}\dm{z}\right).
    \end{align*}
    By the classical Cauchy integral formula, the integral in brackets is
    $1$ if $j\in F$ and $0$ otherwise. Hence, the last expression
    is equal to \[
        \frac{1}{\tpi}\int_{\gamma}Q_\phi\left[\sum_{j\in F}\frac{Q_\phi[u_j^\phi,u_j]}{\tilde{\mu}_j-z}u_j^\phi,g\right]\dm{z},
    \] and the claim follows.

    The next step is to obtain $L^p$ bounds on the operator norm of $R_F^\phi(z)$.
    Taking $f\in \Cc$, which dense in both $L^2$ and $L^p$, we have \[
        \norml{R_F^\phi(z)f}{p}{\Omega}
        \leq \sum_{j\in F}\frac{\abs{Q_\phi[u_j^\phi,f]}}{\abs{\tilde{\mu}_j-z}}\norml{u_j^\phi}{p}{\Omega}.
    \] The change of variables formula immediately shows that (for $1/p+1/q=1$)
    \[
        \norml{u_j^\phi}{p}{\Omega} \leq \frac{\norml{\tilde{u}_j}{p}{\tilde{\Omega}}}{\inf_\Omega{\abs{\det{D\phi}}^{1/p}}}
    \] and \[
        \abs{Q_\phi[u_j^\phi,f]} \leq \frac{\norml{\det{D\phi}}{\infty}{\Omega}\norml{\tilde{u}_j}{q}{\tilde{\Omega}}}{\inf_\Omega{\abs{\det{D\phi}}^{1/q}}}\norml{f}{p}{\Omega}.
    \] Hence, it follows that \[
        \norm{R_F^\phi(z)}_{L^p(\Omega)\to L^p(\Omega)} \leq
            \frac{1}{\min_F\abs{\tilde{\mu}_j-z}}
            \frac{\norml{\det{D\phi}}{\infty}{\Omega}}{\inf_\Omega{\abs{\det{D\phi}}}}
            \sum_{j\in F}\norml{\tilde{u}_j}{p}{\tilde{\Omega}}\norml{\tilde{u}_j}{q}{\tilde{\Omega}}.
    \]

    Let us write
    \begin{equation}\label{eqn:Rzconst}
        C(F,\phi) := \frac{\norml{\det{D\phi}}{\infty}{\Omega}}{\inf_\Omega{\abs{\det{D\phi}}}}
        \sum_{j\in F}\norml{\tilde{u}_j}{p}{\tilde{\Omega}}\norml{\tilde{u}_j}{q}{\tilde{\Omega}}.
    \end{equation}
    Then, applying (\ref{eqn:CIFclaim}) to (\ref{eqn:projdiff}), and recalling assumption (\ref{eqn:resolventhyp})
    yields:
    \begin{equation*}
        \norml{P_Fu-P_F^\phi u}{p}{\Omega}\lesssim C(F,\phi)\int_{\gamma}\frac{C(T,z)}{\min_F{\abs{\tilde{\mu}_j-z}}}\dm{z}
            \norm{T-T_\phi}_{L^p(\Omega)\to L^p(\Omega)}\norml{u}{p}{\Omega}.
    \end{equation*}

    We can package the first two terms into a constant $C(F,T,\phi)$.
    On the other hand, \cref{prop:DeltaSdiff} gave us the estimate
    \begin{align*}
        \norm{T-T_\phi}_{L^p(\Omega)\to L^p(\Omega)}
        \lesssim \frac{1}{\inf_\Omega \abs{\det{D\phi}}}\left(
                \norml{1-\abs{\det{D\phi}}}{\infty}{\Omega}+K_\phi\right),
    \end{align*}
    and the first result immediately follows.

    The continuity statement is an immediate consequence of \cref{lemma:detDphi}
    and \cref{lemma:Kphi}.
\end{proof}

\cref{cor:phihol} gives us the following simplification when $\phi$ is conformal.
\begin{cor}
    Suppose, in addition, that $\phi$ is holomorphic. Then,
        \[
            \norm{P_{F}-P_{F}^\phi}_{L^p(\Omega)\to L^p(\Omega)}
            \lesssim
            C(F)\norml{1-\abs{\det{D\phi}}}{\infty}{\Omega}\times E_\phi,
        \]
        where $E_\phi$ remains bounded as $\mathrm{id}-\phi\to 0$ in $W^{2,\infty}$.
\end{cor}
\begin{remark}
    When $p=2$, the term \[
        \sum_{j\in F}\norm{u_j}_{L^p}\norm{u_j}_{L^q}
    \] appearing in \cref{eqn:Rzconst} can be improved to $1$ \citep[see][eq.\,(13)]{Lamberti2005}.
    Indeed, for any compact operator $T$, \[
        \norm{(T-z)^{-1}f}^2_{L^2} = \sum_j\frac{\abs{\pair{u_j,f}}^2}{\abs{\mu_j-z}^2}
        \leq \frac{1}{\min{\abs{\mu_j-z}}^2}\norm{f}^2_{L^2}.
    \]
    In their paper, \cite{Lamberti2005} claim to obtain this bound from
    the following inequality \citep[VI.3, Theorem 3.1]{TaylorLay86}: \[
        \norm{(T-z)^{-1}} \leq \frac{1}{d(z, V(T))},
    \] where $d(z,V(T))$ is the distance of $z\in\gamma$
    to the \emph{numerical range} of the operator $T$, \[
        V(T) := \left\{(Tx,x) : \norm{x} = 1\right\}.
    \] However, in a Hilbert Space \citep[p.\,116]{Halmos82}, $V(T)$
    is the closed convex hull of the spectrum of $T$.
    Since $T$ is compact, $0$ lies in the closure of the spectrum,
    and so the contour $\gamma$ as described will necessarily intersect
    the numerical range when it crosses the real axis.
    Our alternative calculation has the advantage of generalising to
    $L^p$ for the truncated operators $R_F^\phi(z)$,
    at the cost of increasing the size of the constants.
\end{remark}

Under suitable assumptions on $\phi$, one can use $P^\phi_F$ to transfer bounds on $P_F\colon L^p(\Omega)\to L^p(\Omega)$
to bounds on $\tilde{P}_F\colon L^p(\tilde{\Omega})\to L^p(\tilde{\Omega})$.
This ``transference'' result is encapsulated in the following lemma.
\begin{lemma}[Transference from $\Omega$ to $\tilde{\Omega}$]
    Let $\phi\colon\Omega\to\tilde{\Omega}$ be (weakly) differentiable, invertible and satisfy the bounds \[
        0 < \inf_\Omega \abs{\det{D\phi}} \leq \sup_\Omega\abs{\det{D\phi}} < \infty.
    \]
    Suppose that we have bounds of the form
    \[
        \norm{P_F}_{L^p(\Omega)\to L^p(\Omega)} \leq A
    \] and \[
        \norm{P_F - P_F^\phi}_{L^p(\Omega)\to L^p(\Omega)} \leq B.
    \] Then, we have the bound \[
        \norm{\tilde{P}_F}_{L^p(\tilde{\Omega})\to L^p(\tilde{\Omega})}
            \leq \left(\frac{\sup\abs{\det{D\phi}}}{\inf\abs{\det{D\phi}}}\right)^{1/p}(A+B).
    \]
\end{lemma}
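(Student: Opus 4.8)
The plan is to relate the three operators by a change of variables and then string together the given bounds. First I would record the basic dictionary induced by $\phi$: if $g\in L^p(\tilde\Omega)$ and $f := g\circ\phi\in L^p(\Omega)$, then the substitution $y=\phi(x)$ gives
\[
    \norm{g}_{L^p(\tilde\Omega)}^p = \int_{\tilde\Omega}\abs{g(y)}^p\dm{y} = \int_\Omega \abs{f(x)}^p\abs{\det D\phi(x)}\dm{x},
\]
so that, using the two-sided bound on $\abs{\det D\phi}$,
\[
    \left(\inf_\Omega\abs{\det D\phi}\right)^{1/p}\norm{f}_{L^p(\Omega)}
    \leq \norm{g}_{L^p(\tilde\Omega)}
    \leq \left(\sup_\Omega\abs{\det D\phi}\right)^{1/p}\norm{f}_{L^p(\Omega)}.
\]
The same comparison applies to $u^\phi_j = \tilde u_j\circ\phi$ and to the pulled-back inner products: a further change of variables shows $Q_\phi[u^\phi_j,f] = \pair{\tilde u_j, g}_{\tilde\Omega}$ whenever $f = g\circ\phi$. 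Hence the key algebraic identity is that $P^\phi_F$ is the conjugate of $\tilde P_F$ by the composition operator $C_\phi\colon g\mapsto g\circ\phi$, namely $P^\phi_F(g\circ\phi) = (\tilde P_F g)\circ\phi$, or equivalently $\tilde P_F = C_\phi^{-1}\, P^\phi_F\, C_\phi$ as operators on the relevant $L^p$ spaces.

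With this identity in hand the estimate is a three-line composition. Given $g\in L^p(\tilde\Omega)$, set $f = g\circ\phi$; then
\[
    \norm{\tilde P_F g}_{L^p(\tilde\Omega)}
    \leq \left(\sup_\Omega\abs{\det D\phi}\right)^{1/p}\norm{P^\phi_F f}_{L^p(\Omega)}
\]
by the upper comparison applied to the function $\tilde P_F g$, whose pull-back is $P^\phi_F f$. Next, write $P^\phi_F = P_F - (P_F - P^\phi_F)$ and apply the triangle inequality together with the two hypothesised bounds to get $\norm{P^\phi_F f}_{L^p(\Omega)} \leq (A+B)\norm{f}_{L^p(\Omega)}$. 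Finally the lower comparison applied to $g$ itself gives $\norm{f}_{L^p(\Omega)} \leq \left(\inf_\Omega\abs{\det D\phi}\right)^{-1/p}\norm{g}_{L^p(\tilde\Omega)}$. Chaining these three inequalities yields exactly
\[
    \norm{\tilde P_F g}_{L^p(\tilde\Omega)}
    \leq \left(\frac{\sup_\Omega\abs{\det D\phi}}{\inf_\Omega\abs{\det D\phi}}\right)^{1/p}(A+B)\,\norm{g}_{L^p(\tilde\Omega)},
\]
which is the claimed operator-norm bound after taking the supremum over $g$.

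I expect the only genuine point requiring care — the ``main obstacle,'' such as it is — to be the verification of the conjugation identity $P^\phi_F = C_\phi^{-1} P^\phi_F C_\phi$ at the level of the definitions, i.e.\ checking that the coefficient functionals match up: $Q_\phi[u^\phi_j, g\circ\phi] = \pair{\tilde u_j, g}_{\tilde\Omega}$. This is precisely the content of \cref{eqn:L2PhiOm} with $u = u^\phi_j = \tilde u_j\circ\phi$ and $v$ replaced by $g\circ\phi$, so it reduces to the same change of variables, but one should be careful that $g\circ\phi$ and $u^\phi_j$ genuinely lie in the spaces where $P^\phi_F$ is defined and that the eigenfunction correspondence $u^\phi_j = \tilde u_j\circ\phi$ from \citet[Theorem 3.10]{Lamberti2005} is being used correctly. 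The integrability hypotheses on $\abs{\det D\phi}$ ensure $C_\phi$ is a bounded bijection $L^p(\tilde\Omega)\to L^p(\Omega)$ with bounded inverse, so no measure-theoretic subtlety arises; everything else is the bookkeeping of composing the displayed inequalities in the right order.
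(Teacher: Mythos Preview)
Your proof is correct and follows essentially the same route as the paper's: both establish that $\tilde P_F$ is the conjugate of $P^\phi_F$ by the composition operator $C_\phi$ via a change of variables, then bound $\norm{P^\phi_F f}_{L^p(\Omega)}$ using the triangle inequality with $P^\phi_F = P_F + (P^\phi_F - P_F)$, and finally relate $\norm{f}_{L^p(\Omega)}$ back to $\norm{g}_{L^p(\tilde\Omega)}$ by another change of variables. (Note the typo in your closing paragraph: the conjugation identity should read $\tilde P_F = C_\phi^{-1} P^\phi_F C_\phi$, as you had it correctly earlier.)
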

\begin{proof}
    Fix $f\in L^p(\Omega)$ and let $g := f\circ \phi^{-1}\in L^p(\tilde{\Omega})$.
    Then, by a change of variables in the integral defining the inner product on $\tilde{\Omega}$, we have
    \[
        \tilde{P}_Fg = \sum_{j\in F}
            \left(\int_{\Omega}\tilde{u}_j(\phi(x))f(x)\abs{\det{D\phi(x)}}\dm{x}\right)\tilde{u}_j
            = \sum_{j\in F} Q_\phi[\tilde{u}_j\circ\phi, f] \tilde{u}_j.
    \] Taking $L^p(\tilde{\Omega})$ norms and changing variables again:
    \begin{align*}
        \norml{\tilde{P}_F f}{p}{\tilde{\Omega}}^p &=
            \int_{\tilde{\Omega}}\abs{\sum_{j\in F}Q_\phi[\tilde{u}_j\circ\phi, f]\tilde{u}_j(y)}^p\dm{y}\\
        &= \int_{\Omega}\abs{\sum_{j\in F}Q_\phi[\tilde{u}_j\circ\phi,f]\tilde{u}_j(\phi(x))}^p
            \abs{\det{D\phi(x)}}\dm{x}\\
        &\leq \norml{\det{D\phi(x)}}{\infty}{\Omega}
            \norml{\sum_{j\in F}Q_\phi[u^\phi_j,f] u^\phi_j}{p}{\Omega}^p,
    \end{align*}
    whence we arrive at \[
        \norml{\tilde{P}_Fg}{p}{\tilde{\Omega}} \leq \norml{\det{D\phi(x)}}{\infty}{\Omega}^{1/p}
            \norml{P^\phi_F f}{p}{\Omega}.
    \] The result now follows by applying the triangle inequality to
    $P^\phi_F f = P_Ff + P^\phi_F f - P_Ff$
    and noticing that
    \[
        \norml{f}{p}{\Omega}^p \leq \sup_{\tilde{\Omega}}\abs{\det{D\phi}^{-1}}\norml{g}{p}{\tilde{\Omega}}^p.\qedhere
    \]
\end{proof}

\section{Resolvent bounds for the square}\label{sec:resolvent}
In this section we will verify that \cref{eqn:resolventhyp} holds
when $\Omega$ is a square.

Recall that the Dirichlet eigenfunctions of $-\Delta$ on $[0,\pi]^2$ are $$u_{m,n}(x,y)=\sin(mx)\sin(ny)$$ for $(m,n)\in\N$,
with corresponding eigenvalues $\lambda_{m,n} = m^2+n^2$.

Thus, in $L^2$, we have \[
    (T-z)^{-1}u = \sum_{m,n}\frac{\hat{u}(m,n)}{\frac{1}{m^2+n^2}-z}u_{m,n},
\] where $\hat{u}(m,n)$ are the double-sine Fourier coefficients of $u\in L^2([0,\pi]^2)$,
provided that $z\in\C$ is not a sum of two squares.

Thus, the operator $(T-z)^{-1}$ can be viewed as a Torus Fourier multiplier with symbol \[
    \sigma_z(m,n) := \frac{m^2+n^2}{1-z(m^2+n^2)} \qfa m,n\in\N.
\]

Combining the Mihlin--H\"ormander Multiplier Theorem \citep[Theorem 6.2.7]{GrafakosCFA}
and a Transference Theorem \citep[Theorem 4.3.7]{GrafakosCFA}, we obtain the following result.
\begin{prop}\label{prop:multipliers}
    Suppose that $\chi \in L^\infty(\R^2)\cap C^2(\R^2)$ satisfies \[
        \abs{\partial^\alpha\chi(\xi)} \leq A\abs{\xi}^{-\abs{\alpha}}
    \] for all $\xi\in\R^2$ and all multiindices $\alpha$ with $\abs{\alpha}\leq 2$.
    Then, for all $1<p<\infty$, we have \[
        \norml{(\chi\hat{f})^{\lor}}{p}{\R^2} \leq C_p A\norml{f}{p}{\R^2}.
    \] Hence, for $1<p<\infty$, the operator \[
        f \mapsto \sum_{m,n}\chi(m,n)\hat{f}(m,n)u_{m,n}
    \] is bounded on $L^p([0,\pi]^2)$ with norm at most $C_pA$.
\end{prop}

We cannot apply \cref{prop:multipliers} directly to $\sigma_z$,
since $\sigma_z(\xi)$ is unbounded as $\xi \to \frac{1}{\sqrt{z}}$ in $\R^2$.
However, since we are only interested in $\sigma_z$ at integer lattice points $(m,n)$,
and by construction $1/z$ cannot be a sum of squares (since $z$ is not an eigenvalue of $-\Delta$),
it suffices to find another function that is an $L^p(\R^2)$ multiplier
and agrees with $\sigma_z$ on $\N^2$.

The solution is to select a smooth, radial cut-off function $\rho_z:[0,\infty)\to [0,1]$
which is zero on an annulus $\frac{1}{\sqrt{z}}-\epsilon < \abs{\xi} < \frac{1}{\sqrt{z}} + \epsilon$,
and $\rho_z\equiv 1$ outside the larger annulus
$\sqrt{\lambda_{\max F}}+\delta < \abs{\xi} <\sqrt{\lambda_{\max F+1}}-\delta$
(see \cref{fig:rho}).

\begin{figure}[ht]
    \begin{center}
        \includegraphics[scale=.3]{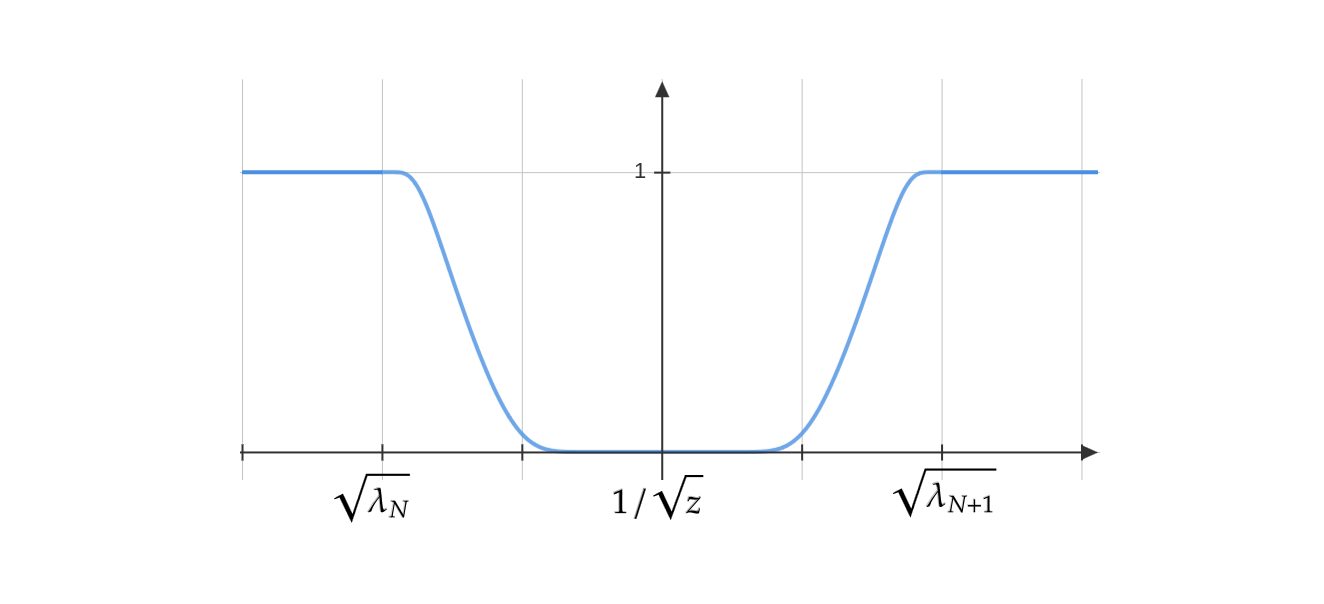}
        \caption{Graph of the smooth cut-off $\rho_z$ centred at $\abs{\xi}=\frac{1}{\sqrt{z}}.$}
        \label{fig:rho}
    \end{center}
\end{figure}

Finally, define $\chi_z(\xi) := \sigma_z(\xi)\rho_z(\abs{\xi})$.
This new symbol is smooth and bounded.
Furthermore, it agrees with $\sigma_z$ on lattice points, since $\rho_z=1$
at all such points.
Tedious calculations show that $\chi_z$ satisfies the assumptions of \cref{prop:multipliers},
and so it follows that $(T-z)^{-1}$ is bounded from $L^p$ to $L^p$.

\begin{remark}
    A note on the size of the constant $A$ in \cref{prop:multipliers}.
    If $F = \{1,\ldots, N\}$, then the ``worse'' value of $z$
    along the contour $\gamma$ in the proof of \cref{thm:deltaproj2}
    occurs when $\frac{1}{\lambda_{N+1}}< z < \frac{1}{\lambda_{N}}$,
    so the width of the ``window'' around $\frac{1}{\sqrt{z}}$ in \cref{fig:rho}
    will have to be comparable to $\sqrt{\lambda_{N+1}}-\sqrt{\lambda_N}$.
    In $\R^2$, as a consequence of Weyl's Law,
    $\lambda_N \approx N$ and $\lambda_{N+1}-\lambda_N \approx 1$ as $N\to\infty$.
    (Here $A \approx B$ means that $A/B$ remains bounded above and below by constants.)
    Therefore, the window is shrinking like $1/N$ as $N\to\infty$.
    This dependence on $N$ (\ie on $F$) manifests in the size of the derivatives of $\rho$.
    By our calculations, $A = O(N^5)$ when estimating the derivatives of $\chi_z$ up to second order.
\end{remark}

Finally, we note that we may also take $\Omega$ to be a rectangle,
by appropriate rescaling each direction.
\section{Conclusions and open problems}

This approach has two important limitations.
The first is the assumption that the resolvents $(-\Delta^{-1}-z)^{-1}$ be $L^p$
bounded.
As far as we know, the only general results for Banach spaces yield bounds
in terms of the distance of $z$ to the numerical range \citep[see][Theorem 6.11]{Carvalho2012}.
However, the numerical range contains the closed convex hull of the spectrum \citep[see]{Zenger68},
which is $[0,1/\lambda_1]$ for $-\Delta^{-1}$.
Thus, the distance from $z\in (1/\lambda_{N+1},1/\lambda_N)$ to the numerical range
is 0.

Another assumption we have to make is
that the projections $P_F$ map onto eigenspaces which do not split eigenvalues.
Eigenvalue splitting is not allowed because of Kato's projection formula,
as discussed in the remark immediately after \cref{thm:hilbertspaceproj}.

The following example motivates our interest in relaxing the notion of splitting.
\begin{example}\label{ex:cutoffs}
    Let $\Omega = [0,1]^2$, a unit square; then the eigenvalues of the Dirichlet Laplacian are
    $\pi^2(m^2+n^2)$ for $(m,n)$ running over $\N^2$. Fix any $N\in\N$.
    The following results are classical \citep[see][]{GrafakosCFA}:
    \begin{enumerate}
        \item If we take $F_N = \{(m,n) : 0< m, n \leq N\}$, then $P_{F_N}$ is the partial sum operator
        corresponding to ``square cutoff'' truncations of a double sine-series on the torus.
        For any value of $1<p<\infty$, these projections are uniformly bounded in $N$,
        which is equivalent to saying that the partial sums $P_{F_N}u$ converge to $u$ for any $u\in L^p$.
        \item If we take $B_N = \{(m,n) : m^2 + n^2 \leq N^2\}$, then $P_{B_N}$ is the partial sum operator
        corresponding to ``circular cutoff'' truncations of a double sine-series on the torus.
        These partial sums do not converge in any $L^p$ space other than $p=2$.
    \end{enumerate}

    It would be interesting to transfer the convergence results for (i) to other domains
    obtained by perturbing the square by some suitable $\phi$, as described here.
    However, by definition, the sets $B_N$ will not split eigenvalues,
    while the sets $F_N$ will often split eigenvalues.
    Take, for instance, the pairs $(5,5)$ and $(1,7)$,
    both corresponding to the eigenvalue $50\pi^2$.
    Then $(5,5)\in F_6$ but $(1,7)\notin F_6$.

    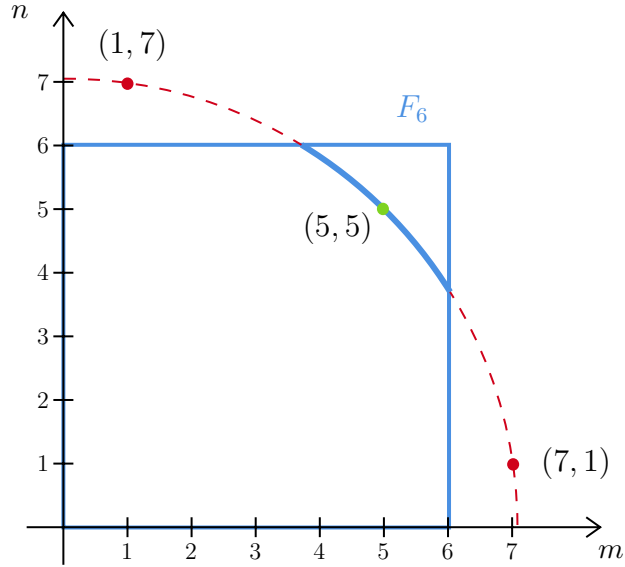
\begin{figure}[ht]
        \begin{center}
            \tikzset{every picture/.style={line width=0.75pt}} %set default line width to 0.75pt

\begin{tikzpicture}[x=0.75pt,y=0.75pt,yscale=-1,xscale=1]
%uncomment if require: \path (0,319); %set diagram left start at 0, and has height of 319

%Flowchart: Connector [id:dp05127370093770511]
\draw  [draw opacity=0][fill={rgb, 255:red, 208; green, 2; blue, 27 }  ,fill opacity=1 ] (298.51,237) .. controls (298.51,235.27) and (299.86,233.88) .. (301.51,233.88) .. controls (303.17,233.88) and (304.51,235.27) .. (304.51,237) .. controls (304.51,238.73) and (303.17,240.13) .. (301.51,240.13) .. controls (299.86,240.13) and (298.51,238.73) .. (298.51,237) -- cycle ;
%Shape: Square [id:dp41322452922185327]
\draw  [color={rgb, 255:red, 74; green, 144; blue, 226 }  ,draw opacity=1 ][line width=1.5]  (77,76.25) -- (269.5,76.25) -- (269.5,268.75) -- (77,268.75) -- cycle ;
%Flowchart: Connector [id:dp08393998193546537]
\draw  [draw opacity=0][fill={rgb, 255:red, 208; green, 2; blue, 27 }  ,fill opacity=1 ] (106,45.5) .. controls (106,43.77) and (107.34,42.38) .. (109,42.38) .. controls (110.66,42.38) and (112,43.77) .. (112,45.5) .. controls (112,47.23) and (110.66,48.63) .. (109,48.63) .. controls (107.34,48.63) and (106,47.23) .. (106,45.5) -- cycle ;
%Shape: Arc [id:dp8817835481238913]
\draw  [draw opacity=0][dash pattern={on 4.5pt off 4.5pt}] (77.28,43) .. controls (77.47,43) and (77.65,43) .. (77.83,43) .. controls (202.24,43) and (303.16,143.6) .. (303.66,267.89) -- (77.83,268.83) -- cycle ; \draw  [color={rgb, 255:red, 208; green, 2; blue, 27 }  ,draw opacity=1 ][dash pattern={on 4.5pt off 4.5pt}] (77.28,43) .. controls (77.47,43) and (77.65,43) .. (77.83,43) .. controls (202.24,43) and (303.16,143.6) .. (303.66,267.89) ;
%Shape: Arc [id:dp3445840364794135]
\draw  [draw opacity=0][line width=2.25]  (195.85,76.25) .. controls (226,94.77) and (251.46,120.16) .. (270.06,150.25) -- (77.83,268.83) -- cycle ; \draw  [color={rgb, 255:red, 74; green, 144; blue, 226 }  ,draw opacity=1 ][line width=2.25]  (195.85,76.25) .. controls (226,94.77) and (251.46,120.16) .. (270.06,150.25) ;
%Flowchart: Connector [id:dp7323182647113599]
\draw  [draw opacity=0][fill={rgb, 255:red, 126; green, 211; blue, 33 }  ,fill opacity=1 ] (233.51,108.5) .. controls (233.51,106.77) and (234.86,105.38) .. (236.51,105.38) .. controls (238.17,105.38) and (239.51,106.77) .. (239.51,108.5) .. controls (239.51,110.23) and (238.17,111.63) .. (236.51,111.63) .. controls (234.86,111.63) and (233.51,110.23) .. (233.51,108.5) -- cycle ;
%Shape: Axis 2D [id:dp32013598828460066]
\draw  (58.67,268.6) -- (345,268.6)(77,9.75) -- (77,287.5) (338,263.6) -- (345,268.6) -- (338,273.6) (72,16.75) -- (77,9.75) -- (82,16.75) (109,263.6) -- (109,273.6)(141,263.6) -- (141,273.6)(173,263.6) -- (173,273.6)(205,263.6) -- (205,273.6)(237,263.6) -- (237,273.6)(269,263.6) -- (269,273.6)(301,263.6) -- (301,273.6)(72,236.6) -- (82,236.6)(72,204.6) -- (82,204.6)(72,172.6) -- (82,172.6)(72,140.6) -- (82,140.6)(72,108.6) -- (82,108.6)(72,76.6) -- (82,76.6)(72,44.6) -- (82,44.6) ;
\draw   (116,280.6) node[anchor=east, scale=0.75]{1} (148,280.6) node[anchor=east, scale=0.75]{2} (180,280.6) node[anchor=east, scale=0.75]{3} (212,280.6) node[anchor=east, scale=0.75]{4} (244,280.6) node[anchor=east, scale=0.75]{5} (276,280.6) node[anchor=east, scale=0.75]{6} (308,280.6) node[anchor=east, scale=0.75]{7} (74,236.6) node[anchor=east, scale=0.75]{1} (74,204.6) node[anchor=east, scale=0.75]{2} (74,172.6) node[anchor=east, scale=0.75]{3} (74,140.6) node[anchor=east, scale=0.75]{4} (74,108.6) node[anchor=east, scale=0.75]{5} (74,76.6) node[anchor=east, scale=0.75]{6} (74,44.6) node[anchor=east, scale=0.75]{7} ;

% Text Node
\draw (342.43,276.47) node [anchor=north west][inner sep=0.75pt]  [font=\small]  {$m$};
% Text Node
\draw (49.43,4.9) node [anchor=north west][inner sep=0.75pt]  [font=\small]  {$n$};
% Text Node
\draw (241.5,49.9) node [anchor=north west][inner sep=0.75pt]  [color={rgb, 255:red, 74; green, 144; blue, 226 }  ,opacity=1 ]  {$F_{6}$};
% Text Node
\draw (195,108.4) node [anchor=north west][inner sep=0.75pt]    {$( 5,5)$};
% Text Node
\draw (314,226.9) node [anchor=north west][inner sep=0.75pt]    {$( 7,1)$};
% Text Node
\draw (92.5,16.4) node [anchor=north west][inner sep=0.75pt]    {$( 1,7)$};

\end{tikzpicture}
            \caption{The ``box'' $F_6$ splits the eigenvalue $\lambda=50\pi^2$.
            The region $\Gamma_6$ is denoted by the solid blue arc.}
            \label{fig:badregion}
        \end{center}
    \end{figure}
    Denote this ``bad region'' by $\Gamma_N$, \ie\[
        \Gamma_N = \{(m,n)\in F_N : m^2+n^2 = k^2+l^2 \text{ for some } (k,l)\notin F_N\}.
    \]
    Then $F_N$ splits eigenvalues if, and only, if $\Gamma_N\neq\emptyset$
    (see \cref{fig:badregion}).
\end{example}
It is rather surprising that this ``small'' region $\Gamma_N$
lying between the circle and the square is responsible for such
dramatic differences in the convergence of the partial sums.
Indeed, Fefferman's proof shows that curvature is a key part of the failure
of $L^p$ convergence.
On the other hand, C\'ordoba's results show that varying the number of vertices,
we can still guarantee convergence in $L^p$ at least in the range $4/3<p<4$.

Our work in this paper has been motivated by the ``failure'' of the circle
to yield ``well-behaved'' (\ie convergent) cutoffs
\citep[see][concluding remarks]{RobinsonFefferman2022}.
Indeed, it would seem that circular cutoffs are the ``natural choice''
to truncate the partial sums since they correspond to choosing
all eigenfunctions whose eigenvalues are bounded by some value.
The square truncations, on the other hand, exploit the ``artificial''
labelling of these eigenvalues by pairs of integers.
Fefferman himself remarks on his surprise at his own theorem,
since it was conjectured that circular cutoffs would yield convergence
in the region $\frac{4}{3}<p<4$
\citep[see the introduction to][]{Fefferman1971}.

Thus, two open questions of especial interest remain:
\begin{enumerate}
    \item Can we eliminate the ``no-splitting'' requirement in order to apply our results
    to the square cutoffs?
    \item Can we improve the constant $C(F)$ in order to get uniform estimates on the operator norms?
\end{enumerate}

Regarding (ii), we observe that in some simple examples, there is \emph{no} constant $C(F)$,
and so we can immediately recover $L^p$ convergence on $\tilde{\Omega}$ from $L^p$ convergence on $\Omega$.
\begin{example}
    Let $\Omega = [0,\pi]^2$ with eigenfunctions $u_{m,n} = \sin(mx)\sin(ny)$,
    and $\phi(x,y) = (ax, by)$ mapping onto $\tilde{\Omega} = [0,\pi a]\times [0, \pi b]$.
    The eigenfunctions on the new domain are $\tilde{u}_{m,n} = \sin(mx/a)\sin(ny/b)$.
    A simple calculation then shows that \[
        \norml{P_F u - P_F^\phi u}{p}{\Omega} \leq \abs{1-ab}\norml{P_Fu}{p}{\Omega},
    \] and so we can recover $L^p$ boundedness of the \emph{square} projections.
\end{example}

\section*{Acknowledgments}
This work was conducted thanks to the EPSRC studentship 2443915 under the project EP/V520226/1.
For the purpose of open access, the authors have applied a Creative Commons Attribution (CC BY) license
to any Author Accepted Manuscript version arising from this submission.

%\section*{Declaration of interests}
%The authors declare none.

\bibliography{bibliography}

\begin{thebibliography}{}

\bibitem[Acosta~Babb, 2023a]{Acosta2023TFs}
Acosta~Babb, R.~L. (2023a).
\newblock The {$L^p$} convergence of {F}ourier series on triangular domains.
\newblock {\em Proceedings of the Edinburgh Mathematical Society},
  66(2):453--474.

\bibitem[Acosta~Babb, 2023b]{Acosta2023BF}
Acosta~Babb, R.~L. (2023b).
\newblock Remarks on the ${L}^p$ convergence of {B}essel--{F}ourier series on
  the {D}isc.
\newblock {\em Comptes Rendues de l'Acad\'emie des Sciences - Math\'ematique},
  361:1075--1080.

\bibitem[Carvalho et~al., 2012]{Carvalho2012}
Carvalho, A., Langa, J., and Robinson, J. (2012).
\newblock {\em Attractors for infinite-dimensional non-autonomous dynamical
  systems}.
\newblock Applied Mathematical Sciences. Springer New York.

\bibitem[Conway, 2007]{Conway2007}
Conway, J.~B. (2007).
\newblock {\em A Course in Functional Analysis}.
\newblock Graduate Texts in Mathematics. Springer New York.

\bibitem[C\'ordoba, 1977]{Cordoba1977}
C\'ordoba, A. (1977).
\newblock The multiplier problem for the polygon.
\newblock {\em Annals of Mathematics}, 105(3):581--588.

\bibitem[Courant and Hilbert, 1989]{Courant1989}
Courant, R. and Hilbert, D. (1989).
\newblock {\em Methods of Mathematical Physics}, volume~1 of {\em Methods of
  Mathematical Physics}.
\newblock Wiley.

\bibitem[Fefferman, 1971]{Fefferman1971}
Fefferman, C.~L. (1971).
\newblock The multiplier problem for the ball.
\newblock {\em Annals of Mathematics}, 94(2):330--336.

\bibitem[Fefferman et~al., 2022]{RobinsonFefferman2022}
Fefferman, C.~L., Hajduk, K.~W., and Robinson, J.~C. (2022).
\newblock Simultaneous approximation in {L}ebesgue and {S}obolev norms via
  eigenspaces.
\newblock {\em Proceedings of the London Mathematical Society},
  125(4):759--777.

\bibitem[Gilbarg and Trudinger, 2001]{Gilbarg2001}
Gilbarg, D. and Trudinger, N. (2001).
\newblock {\em Elliptic Partial Differential Equations of Second Order}.
\newblock Classics in Mathematics. Springer Berlin Heidelberg.

\bibitem[Grafakos, 2014a]{GrafakosCFA}
Grafakos, L. (2014a).
\newblock {\em Classical Fourier Analysis}.
\newblock Graduate Texts in Mathematics. Springer New York.

\bibitem[Grafakos, 2014b]{Grafakos2014MFA}
Grafakos, L. (2014b).
\newblock {\em Modern Fourier Analysis}.
\newblock Graduate Texts in Mathematics. Springer New York.

\bibitem[Grisvard, 1985]{Grisvard85}
Grisvard, P. (1985).
\newblock {\em Elliptic Problems in Nonsmooth Domains}.
\newblock Society for Industrial and Applied Mathematics.

\bibitem[Halmos, 1982]{Halmos82}
Halmos, P.~R. (1982).
\newblock {\em A Hilbert Space Problem Book}, volume~19.
\newblock Springer New York, New York, NY, second, rev. and enlarg edition.

\bibitem[Lamberti and Lanza~de Cristoforis, 2005]{Lamberti2005}
Lamberti, P.~D. and Lanza~de Cristoforis, M. (2005).
\newblock A global {L}ipschitz continuity result for a domain dependent
  {D}irichlet eigenvalue problem for the {L}aplace operator.
\newblock {\em Zeitschrift f{\"u}r Analysis und ihre Anwendungen},
  24(2):277--304.

\bibitem[Maz'ya and Shaposhnikova, 1985]{Mazya85}
Maz'ya, V. and Shaposhnikova, T. (1985).
\newblock {\em Theory of Multipliers in Spaces of Differentiable Functions}.
\newblock Monographs and Studies. Pitman.

\bibitem[Taylor and Lay, 1986]{TaylorLay86}
Taylor, A.~E. and Lay, D.~C. (1986).
\newblock {\em Introduction to functional analysis}.
\newblock R.E. Krieger Pub. Co, Malabar, Fla, 2nd, reprint edition.

\bibitem[Zenger, 1968]{Zenger68}
Zenger, C. (1968).
\newblock On convexity properties of the {B}auer field of values of a matrix.
\newblock {\em Numerische Mathematik}, 12(2):96--105.

\end{thebibliography}

\end{document}